\newcommand{\spc}{\text{ }}
\newcommand{\calc}{\mathcal{C}}
\newcommand{\tab}{\hspace{0.2in}}
\newcommand{\co}{\mathscr{O}}
\newcommand{\costar}{\mathscr{O}^*}
\numberwithin{equation}{section}
\theoremstyle{plain}
\newtheorem{thm}[equation]{Theorem}
\newtheorem{cor}[equation]{Corollary}
\newtheorem{lem}[equation]{Lemma}
\newtheorem{conj}[equation]{Conjecture}
\theoremstyle{definition}
\newtheorem{defn}[equation]{Definition}
\theoremstyle{remark}
\begin{document}
\belowdisplayskip=12pt
\belowdisplayshortskip=7pt plus 3pt
\title{\large Weak Orientability of Matroids \\ and Polynomial Equations}

\author{J.A. De Loera, J. Lee, S. Margulies, J. Miller}
\email{}

\date{\today}

\begin{abstract} This paper studies systems of polynomial equations that provide information about orientability of matroids.

First, we study systems of linear equations over $F_2$, originally alluded to by Bland and Jensen in their seminal paper on weak orientability. The Bland-Jensen linear equations for a matroid $M$ have a solution if and only if $M$ is weakly orientable. We use the Bland-Jensen system to determine weak orientability for all matroids on at most nine elements and all matroids between ten and twelve elements having rank three.  Our experiments indicate that for small rank, about half the time, when a simple matroid is not orientable, it is already non-weakly orientable. Thus, about half of the small simple non-orientable matroids of rank three are not representable over fields having order congruent to three modulo four.
For  binary matroids, the Bland-Jensen linear systems provide a practical way to check orientability.

Second, we present two extensions of the Bland-Jensen equations to slightly larger systems of \emph{non-linear}  polynomial equations.
Our systems of polynomial equations have a solution if and only if the associated matroid $M$ is orientable. The systems come in two versions, one directly
extending the Bland-Jensen system for $F_2$, and a different system working over other fields. We study some basic algebraic properties of these systems.

Finally, we present an infinite family of non-weakly-orientable matroids, with growing rank and co-rank. We conjecture that these matroids are minor-minimal
non-weakly-orientable matroids.
\end{abstract}

\maketitle

\section{Introduction}
Oriented matroids are special matroids with additional geometric-topological structure.  Their importance in mathematics is demonstrated by their generalizing of many combinatorial objects including directed graphs, vector configurations over ordered fields, hyperplane arrangements in Euclidean space, convex polytopes, and even linear programs (see \cite{basic} and \cite{Orientmat}). Oriented matroids  were first introduced by Bland and Las Vergnas in \cite{Matorient} and Folkman and Lawrence in \cite{Matorient2}. Not all matroids can be oriented, and deciding when a matroid is orientable is related to the classical problem of realizability of matroids over the real numbers that has found many applications in computational geometry (see e.g., \cite{Orientmat,Bokowskietal,bokowskisturmfelsbook} and references therein). Determining whether a matroid, given by its set of non-bases, is orientable is  an NP-complete problem, even for fixed rank (see \cite{OrientNP}). In this paper, we revisit the problem of deciding orientability, through the simpler notion of weak orientability.
As we will recall, weakly-oriented matroids are a natural intermediate structure between matroids and oriented matroids.

Weakly-oriented matroids were introduced by Bland and Jensen in the late 1980's in \cite{Weak}.
Their starting point was a theorem of Minty characterizing matroids via a coloring property
on the sets of circuits and cocircuits (see \cite{Minty}). Oriented matroids satisfy a stronger version of this property
that is defined on the sets of \emph{signed} circuits and cocircuits.
There is a natural intermediate property, also defined on the sets of
signed circuits and cocircuits, which defines what Bland and Jensen called weakly-oriented matroids.
Bland and Jensen alluded to the existence of a system of linear equations over $F_2$ that could be
used to determine the weak orientability of a given matroid \cite{Weak}.

Note that while non-weak orientability is easier to verify (as it only needs the solution of a linear
system of equations), it is also a more pathological condition than non-orientability for matroids. Indeed,
non-weak orientability implies non-representability over fields having order congruent to 3 modulo 4 (see \cite{Weak});
in particular, ternary matroids (i.e., those representable over $F_3$) are always weakly-orientable, while ternary matroids
may or may not be orientable (see \cite{LeeScobee}). Moreover if a matroid is ternary and orientable,
then it is representable over every field that does not have characteristic two (see \cite{LeeScobee}).
Unfortunately, while much work has been done in the field of oriented matroids since their introduction,
the study of  weakly-oriented matroids has remained relatively untouched. In this paper, we continue 
the path outlined by Bland and Jensen with three main contributions:

\begin{itemize}

\item After we give an explicit way to write what we suppose are the linear equations  alluded to
 by Bland and Jensen in \cite{Weak}  from the circuits and cocircuits of the input matroid, we
extend the original linear system to a slightly larger  system with new higher-degree polynomial equations. Our new system can be used to detect orientability.
A matroid is orientable if and only if our system has a solution  over $F_2$. We also present another generalization for fields of characteristic different from two.

\item The orientability of matroids with no more than nine elements and no more than twelve elements with rank-three case was  investigated in \cite{10ele}.
The techniques of \cite{10ele} use a transformation of the orthogonality axioms of oriented matroids to a SAT problem. In the present paper,
we use instead the Bland-Jensen linear equations to determine weak orientability for all of these matroids.  The classification resulting from our computations indicate
non-orientable matroids are often non-weakly-orientable already. Our experiments indicate that for small rank, about half the time, when a simple matroid is not orientable, it is already non-weakly orientable.

%

\item Finally, it is well known that orientability cannot be described by a finite list of excluded minors: an infinite minor-minimal family of growing rank and co-rank was originally presented by Bland and Las Vergnas in \cite{Matorient}, and a infinite minor-minimal family of fixed-rank three was later presented by Ziegler in \cite{MinZieg}. It was conjectured by Bland and Jensen in their paper that the same is true for non-weak orientability \cite{Weak}. We present an infinite family of  matroids, with increasing rank and co-rank, that are all non-weakly orientable, and conjecture that they are in fact minor-minimal.
\end{itemize}


\section{Weak Orientability --- An overview} \label{introtowoms}

We assume elementary knowledge of matroid and oriented matroid theory; for further reading, see  \cite{Orientmat} and \cite{OxleyBook}. For a \emph{matroid} $M$, let $E(M)$ denote the ground set of $M$, and let $\calc(M)$ denote the set of \emph{circuits} of  $M$. Also, let $\calc^*(M)$ denote the set of \emph{cocircuits} of $M$, or equivalently the set of circuits of $M^*$, the \emph{dual} matroid of $M$. Similarly, for an \emph{oriented matroid} $M$, we let $E(M)$ denote the ground set of $M$, and we let
$\co(M)$  and $\costar(M)$ denote, respectively, the signed circuits and cocircuits of $M$.

Let $E$ be a finite set. A \emph{signed subset} $Z$ of $E$ is a pair $Z=(Z^+ ,Z^- )$ such that
$Z^+$ and $Z^-$ partition a subset of $E$ (i.e., $Z^+\subseteq E$, $Z^-\subseteq E$,
and $Z^+\cap Z^-=\varnothing$).
Here we define the signed subset $-Z:=(Z^-,Z^+)$ and the set $\underbar{Z}:=Z^+\cup Z^-\subseteq E$. For a collection $S$ of signed subsets of $E$, we denote
by $-S=\{-X  ~\big|~  X \in S\}$.

Let $\co$ and $\costar$ each be collections of signed subsets of $E$ such that:
 \begin{itemize}
\item $-\co=\co$,
\item $-\costar=\costar$,
\item if $X,Z\in \co$ with $\underbar{Z}\subseteq\underbar{X}$, then $Z=\pm X$,
\item if $Y,Z\in \costar$ with $\underbar{Z}\subseteq\underbar{Y}$, then $Z=\pm Y$,
 \end{itemize}
 For such collections, we define $\underline{\co}:=\{\underbar{X} ~\big|~ X\in\co\}$  and
 $\underline{\costar}:=\{\underbar{Y} ~\big|~ Y\in\costar\}$ . For a matroid $M$ with $\underline{\co}=\calc(M)$ and $\underline{\costar}=\calc^*(M)$ we say that $\co$ and $\costar$ are a \emph{signing of circuits and cocircuits} of $M$.

Following \cite{Matorient}, we say that a pair of signed subsets $X$ and $Y$ of $E$ are \emph{orthogonal} if
\[
(X^+\cap Y^+)\cup (X^-\cap Y^-)\neq\varnothing\iff (X^+\cap Y^-)\cup (X^-\cap Y^+)\neq\varnothing.
\]
That is, $X$ and $Y$ agree in sign on some element if and only if they disagree
in sign on some element. Moreover, it is well known (see \cite{Matorient}) that when $\underline{\co}$ and $\underline{\costar}$
are the sets of circuits and cocircuits, respectively, of a matroid, then
enforcing the orthogonality condition on all $X$ and $Y$ having $|\underline{X}\cap \underline{Y}|\in\{2,3\}$
is equivalent to enforcing the orthogonality condition on all $X$ and $Y$
(with no condition on $|\underline{X}\cap \underline{Y}|$).

In what follows, we will use the following characterization of weak orientability, which, owing to the remarks
above, is a weakening of the orthogonality condition characterizing oriented matroids. In \cite{Weak}, the following 
definitions were motivated from the Minty Coloring Property for matroids (see \cite{Minty}) and derived as a theorem:

\begin{defn}
Let $\co$ and $\costar$ be a set of signings of the circuits and cocircuits, respectively, of a matroid $M$. We say
the triple $(E,\co,\co^*)$ determines a dual pair of \emph{weakly-oriented matroids}
if for every $X\in\co$ and $Y\in\co^*$ with $|\underbar{X}\cap\underbar{Y}|=2$, we have that $X$ and $Y$ are orthogonal.

In this case $\co$ and $\costar$ are the sets of signed circuits and cocircuits, respectively, of a weakly-oriented matroid (with underlying matroid $M$).

\end{defn}

\begin{defn}\label{weakdef2}
If $M$ is a matroid such that for some triple $(E,\co,\co^*)$ that determines a dual pair of weakly-oriented matroids we have $E(M)=E$, $\calc(M)=\underline{\co}$, and $\calc^*(M)=\underline{\costar}$, then
we say that $M$ is a \emph{weakly-orientable matroid}.
\end{defn}

Thinking of  orientability as a sufficient condition for weak orientability, Bland and Jensen gave
us a stronger (and sometimes more easily checked) sufficient condition for  weak orientability:

\begin{cor}[Bland and Jensen \cite{Weak}] If a matroid $M$ is representable over a
field having order  congruent to 3 modulo 4, then $M$ is weakly-orientable.
\end{cor}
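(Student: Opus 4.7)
The plan is to turn a representation of $M$ over $F$ into a signing of its circuits and cocircuits using the quadratic character $\chi : F^{*} \to \{+,-\}$ (squares to $+$, non-squares to $-$). The essential algebraic input is that $-1$ is a non-square whenever $|F| = q \equiv 3 \pmod{4}$: the group $F^{*}$ is cyclic of order $q-1 \equiv 2 \pmod{4}$, so for any generator $g$ we have $-1 = g^{(q-1)/2}$ with $(q-1)/2$ odd. Thus $\chi$ is a non-trivial homomorphism satisfying $\chi(-\lambda) = -\chi(\lambda)$ for every $\lambda \in F^{*}$.

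I would then fix a representation $A$ of $M$ over $F$ together with a representation $B$ of $M^{*}$ whose rows form a basis of $\ker A$, so that $\ker B = (\ker A)^{\perp}$ inside $F^{E}$. For each circuit $C \in \calc(M)$ pick a dependence $\sum_{e \in C} \lambda_e A_e = 0$ with $\lambda_e \in F^{*}$, which is unique up to a common scalar by minimality of $C$, and define $X = (X^+, X^-)$ by $X^{\pm} := \{e \in C : \chi(\lambda_e) = \pm\}$. Rescaling $\lambda$ by a square preserves $X$ while rescaling by a non-square (such as $-1$) sends $X$ to $-X$, so $\co := \bigcup_C \{X, -X\}$ is a well-defined signing of $\calc(M)$ with $-\co = \co$. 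Define $\costar$ analogously from cocircuit dependences $\sum_{f \in D} \mu_f B_f = 0$.

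To verify weak orthogonality, take $X \in \co$ and $Y \in \costar$ with $\underline{X} \cap \underline{Y} = \{e, f\}$, and view the coefficient vectors $\lambda, \mu \in F^E$ as extended by zero. Then $\lambda \in \ker A$ and $\mu \in \ker B = (\ker A)^{\perp}$, so
\[
\sum_{g \in E} \lambda_g \mu_g \;=\; \lambda_e \mu_e + \lambda_f \mu_f \;=\; 0.
\]
Applying $\chi$ to $\lambda_e \mu_e = -\lambda_f \mu_f$ and using $\chi(-1) = -$ yields $\chi(\lambda_e \mu_e) \neq \chi(\lambda_f \mu_f)$, i.e.\ the signs of $X$ and $Y$ agree at exactly one of $e, f$ and disagree at the other. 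Both $(X^+ \cap Y^+) \cup (X^- \cap Y^-)$ and $(X^+ \cap Y^-) \cup (X^- \cap Y^+)$ are therefore non-empty, which is orthogonality, and Section~\ref{introtowoms} then identifies $(E, \co, \costar)$ as a dual pair of weakly-oriented matroids. The only substantive step is the orthogonality identity $\lambda_e \mu_e + \lambda_f \mu_f = 0$, which is the standard fact that circuit and cocircuit coefficient vectors of a representable matroid pair to zero because they sit in complementary subspaces of $F^E$; the rest is bookkeeping with $\chi$, and the hypothesis $q \equiv 3 \pmod{4}$ enters exactly once and essentially, via $\chi(-1) = -$.
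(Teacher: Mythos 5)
Your argument is correct, and it is worth noting that the paper itself offers no proof of this corollary at all --- it is simply quoted from Bland and Jensen \cite{Weak} --- so what you have written is a genuinely self-contained proof rather than a variant of anything in the text. Both of your key ingredients are sound: for $q\equiv 3\pmod 4$ the quadratic character $\chi$ is a multiplicative homomorphism on $F^{*}$ with $\chi(-1)=-$, and the circuit coefficient vector $\lambda$ (supported on $C$, unique up to a nonzero scalar by minimality) lies in $\ker A$ while the cocircuit coefficient vector $\mu$ lies in $\ker B=(\ker A)^{\perp}$, so $\sum_{g}\lambda_g\mu_g=0$; when the supports meet in exactly $\{e,f\}$ this forces $\chi(\lambda_e\mu_e)\neq\chi(\lambda_f\mu_f)$, which is exactly the orthogonality required in the $|\underline{X}\cap\underline{Y}|=2$ case, and your observation that rescaling by squares fixes $X$ while rescaling by non-squares gives $-X$ is the right bookkeeping to see that $\co$ and $\costar$ form a signing of circuits and cocircuits in the sense of Definition~\ref{weakdef2}. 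Two small remarks: the phrase ``complementary subspaces'' in your closing sentence is loose, since over a finite field $\ker A$ and $(\ker A)^{\perp}$ can intersect nontrivially, but you only use the pairing $\lambda\cdot\mu=0$, which you justified correctly from $\ker B=(\ker A)^{\perp}$; and since the hypothesis enters only through $\chi(-1)=-$, your argument actually proves the stronger statement the paper records right after the corollary, namely that representability over any finite field in which $-1$ is not a square implies weak orientability \cite{wagowski}. In the paper's own framework the same construction can be phrased as exhibiting an explicit solution of the Bland--Jensen linear system of Theorem~\ref{eq:polyweak} by setting $a_{e,X}=\pi(\chi(\lambda_e))$ and $b_{e,Y}=\pi(\chi(\mu_e))$, which ties your proof directly to the machinery the authors use elsewhere.
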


In particular, all matroids representable over $F_3$ are weakly-orientable (but, we note, not always orientable). More generally matroids representable over a 
finite field in which $-1$ is not a square are weakly orientable \cite{wagowski}. We also have a description of weak orientability purely in terms of circuits and cocircuits 
which will be very important for our story:

\begin{thm}[Bland and Jensen \cite{Weak}]\label{eq:weakeq} A matroid $M$ is weakly-orientable if and only if there exists no list of odd cardinality $\{(X_i,Y_i)\}_{i=1}^n$ such that
\begin{enumerate}[i)]
\item $X_i\in\calc(M)$ and $Y_i\in\calc^*(M)$, $\forall i$,
\item $|X_i\cap Y_i|=2$, $\forall i$,
\item For all pairs $X\in\calc(M)$ and $e\in X$ we have $|\{i ~\big|~ X_i=X, e\in X_i\cap Y_i\}|$ is even,
\item For all pairs $Y\in\calc^*(M)$ and $e\in Y$ we have $|\{i ~\big|~ Y_i=Y, e\in X_i\cap Y_i\}|$ is even.
\end{enumerate}
\end{thm}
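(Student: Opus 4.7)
The plan is to reformulate weak orientability as feasibility of an affine system over $F_2$, and then read off the combinatorial certificate in (i)--(iv) via the Fredholm alternative (Farkas-type lemma) for that field.

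\emph{Step 1 (Linearization).} For each pair $(X,e)$ with $X\in\calc(M)$ and $e\in X$, introduce an $F_2$-variable $x_{X,e}$, interpreted as the sign that $e$ receives in a signing of the circuit $X$ (with $0\leftrightarrow +$ and $1\leftrightarrow -$); introduce $y_{Y,e}$ analogously for $Y\in\calc^*(M)$ and $e\in Y$. Signings $\co$ of $\calc(M)$ and $\costar$ of $\calc^*(M)$ in the sense of the definitions above correspond bijectively to $\pm$-orbits of such variable assignments, since negating a signed circuit $X$ flips every $x_{X,\cdot}$. For a pair $(X,Y)$ with $\underbar{X}\cap\underbar{Y}=\{e_1,e_2\}$, orthogonality is the condition that $X$ and $Y$ agree on exactly one of $e_1,e_2$, which over $F_2$ is the single affine equation
$$x_{X,e_1}+x_{X,e_2}+y_{Y,e_1}+y_{Y,e_2}\;=\;1\pmod{2}.$$
Collect these into a system $Az=b$, indexed by pairs $(X,Y)$ with $|\underbar{X}\cap\underbar{Y}|=2$, with right-hand side the all-ones vector $b$. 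By the definition of weak orientability, $M$ is weakly-orientable if and only if $Az=b$ has a solution over $F_2$.

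\emph{Step 2 (Fredholm alternative over $F_2$).} Over any field, an affine system $Az=b$ is inconsistent if and only if some row combination $\lambda$ satisfies $\lambda^{T}A=0$ and $\lambda^{T}b=1$. Viewing the support of $\lambda$ as a list $\{(X_i,Y_i)\}_{i=1}^{n}$ (any pair repeated an even number of times is absorbed mod $2$), conditions (i) and (ii) are built into the indexing of rows. The identity $\lambda^{T}b=1$ says precisely that $n$ is odd. Finally, the variable $x_{X,e}$ enters the equation for $(X_i,Y_i)$ if and only if $X_i=X$ and $e\in X_i\cap Y_i$, so vanishing of its coefficient in $\lambda^{T}A$ is exactly condition (iii); the analogous statement for the variables $y_{Y,e}$ yields (iv).

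\emph{Step 3 (Conclusion).} Combining the two steps, $M$ is weakly-orientable iff $Az=b$ is consistent iff no list $\{(X_i,Y_i)\}_{i=1}^{n}$ of odd cardinality satisfying (i)--(iv) exists, proving the stated biconditional.

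\emph{Main obstacle.} The crux is recognizing that on a two-element intersection orthogonality reduces to a pure parity condition, so the entire theory linearizes cleanly over $F_2$; after that the argument is essentially Farkas-over-$F_2$. The one slightly delicate point is matching the ``list with possible repetition'' phrasing of (iii)--(iv) with the ``$\{0,1\}$-valued certificate'' produced by the Fredholm alternative, but duplicated pairs cancel simultaneously in every coefficient of $\lambda^{T}A$ and in the parity of $n$, so the two formulations are interchangeable.
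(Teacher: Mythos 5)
Your proposal is correct and follows essentially the same route as the paper: the paper first establishes the Bland--Jensen linear system over $F_2$ (via the $\Sigma$-mapping reformulation, Lemma \ref{eq:weakorlem} and Theorem \ref{eq:polyweak}) and then applies the Fredholm alternative over $F_2$ (Lemma \ref{eq:bifark}), reading conditions (i)--(iv) off the certificate $y^TA={\bf 0}$, $y^Tb=1$ exactly as you do. Your explicit remark that repeated pairs in the list cancel modulo $2$ is a point the paper's converse direction passes over silently, but otherwise the two arguments coincide.
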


The outline of the proof of the above theorem given by Bland and Jensen alluded to a linear system over $F_2$ associated to $M$ that is feasible if and only if $M$ is weakly orientable (see \cite[page 12]{Weak}). We proceed here to give an explicit description of such a system, which we suppose are the equations that Bland and Jensen alluded to. Afterwards, we continue on with some consequences.

We begin by reformulating our definitions as in \cite{Lee} (also see \cite[Proposition 6.1]{repthm}):
\begin{defn} Let $\Sigma=\{0,+,-\}$. A \emph{$\Sigma$-mapping} for a matroid $M$ is a pair of maps
$$\phi_{\Sigma}:E(M)\times \calc(M)\mapsto \Sigma,$$
$$\phi_{\Sigma}^*:E(M)\times \calc^*(M)\mapsto \Sigma,$$
such that
$$\phi_{\Sigma}(e,X)=0\spc\text{iff}\spc e\notin X,\spc\forall e\in E(M),\spc X\in\calc (M),$$
$$\phi_{\Sigma}^*(e,Y)=0\spc\text{iff}\spc e\notin Y,\spc\forall e\in E(M),\spc Y\in\calc^* (M).$$
\end{defn}

\begin{lem}\label{eq:weakorlem} A matroid $M$ is \emph{weakly-orientable} if and only if $M$ has a $\Sigma$-mapping $(\phi_{\Sigma},\phi_{\Sigma}^*)$ such that for every pair $X\in \calc(M)$ and $Y\in \calc^*(M)$ with $|X\cap Y|=2$,  the following holds:
$$\phi_{\Sigma}(e,X)\cdot\phi_{\Sigma}^*(e,Y)>0,\text{ for some }e\in X\cap Y$$
$$\iff\spc\spc\phi_{\Sigma}(f,X)\cdot\phi_{\Sigma}^*(f,Y)<0,\text{ for some }f\in X\cap Y.$$
\end{lem}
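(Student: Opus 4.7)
The plan is to show that the $\Sigma$-mapping formalism is just a notational repackaging of the signings $\co, \costar$ of $M$, and that under this repackaging the orthogonality condition $|\underbar{X}\cap\underbar{Y}|=2$ in Definition~\ref{weakdef2} corresponds exactly to the biconditional on products $\phi_\Sigma(e,X)\cdot\phi_\Sigma^*(e,Y)$ displayed in the lemma.

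First, I would establish the dictionary between the two formalisms. Given a signing $(\co,\costar)$ of the circuits and cocircuits of $M$, for each $C\in\calc(M)$ the set $\co$ contains exactly one $\pm$-pair $\{X,-X\}$ with $\underbar{X}=C$, because of the third bulleted axiom on $\co$. Fix, once and for all, a representative $X_C\in\co$ for each $C$, and define
\[
\phi_\Sigma(e,C):=\begin{cases} +, & e\in X_C^+,\\ -, & e\in X_C^-,\\ 0, & e\notin C.\end{cases}
\]
Do the analogous construction for $\phi_\Sigma^*$ from $\costar$. Conversely, given $(\phi_\Sigma,\phi_\Sigma^*)$, for each $C\in\calc(M)$ let $X_C^+:=\{e:\phi_\Sigma(e,C)=+\}$ and $X_C^-:=\{e:\phi_\Sigma(e,C)=-\}$, and set $\co:=\{\pm X_C : C\in\calc(M)\}$, and similarly $\costar$. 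The axioms $-\co=\co$, $-\costar=\costar$, and the minimality axioms on the supports follow immediately from the fact that $\phi_\Sigma$ and $\phi_\Sigma^*$ vanish exactly off their respective circuits/cocircuits. So the two descriptions are equivalent data.

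Next, I would translate the orthogonality condition. For $X\in\co$ and $Y\in\costar$ with $\underbar{X}=C$ and $\underbar{Y}=D$, note that an element $e\in C\cap D$ lies in $X^+\cap Y^+$ or $X^-\cap Y^-$ precisely when $\phi_\Sigma(e,C)\cdot\phi_\Sigma^*(e,D)>0$, and it lies in $X^+\cap Y^-$ or $X^-\cap Y^+$ precisely when $\phi_\Sigma(e,C)\cdot\phi_\Sigma^*(e,D)<0$. Thus the orthogonality condition for the chosen representatives $X_C,Y_D$ is exactly the biconditional in the statement of the lemma. The key observation is that the products $\phi_\Sigma(e,C)\cdot\phi_\Sigma^*(e,D)$ flip sign under replacing $X_C$ by $-X_C$ (or $Y_D$ by $-Y_D$), which simultaneously flips the left and right sides of the biconditional; hence orthogonality for the chosen representatives is equivalent to orthogonality for every element of the $\pm$-pair, which is what Definition~\ref{weakdef2} asks.

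Finally, I would combine these: $M$ is weakly orientable iff there exist signings $(\co,\costar)$ satisfying orthogonality for all $X,Y$ with $|\underbar{X}\cap\underbar{Y}|=2$, which by the dictionary is equivalent to the existence of a $\Sigma$-mapping satisfying the displayed biconditional for all $X\in\calc(M),Y\in\calc^*(M)$ with $|X\cap Y|=2$. The only subtle point to watch is the sign-invariance described above, guaranteeing that the choice of representative $X_C$ is immaterial; everything else is bookkeeping with the axioms of a signing, so I do not expect genuine obstacles beyond being careful with notation.
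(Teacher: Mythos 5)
Your proposal is correct and follows essentially the same route as the paper's proof: both directions are handled by the same dictionary between $\Sigma$-mappings and signings (choosing a representative from each $\pm$-pair, and noting the sign-flip invariance of the orthogonality condition), with the translation of orthogonality into the sign of the products $\phi_{\Sigma}(e,X)\cdot\phi_{\Sigma}^*(e,Y)$. Your write-up merely spells out the "easily checked" details the paper leaves to the reader.
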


\begin{proof} Given such a $\Sigma$-mapping, let $\co=\big\{\pm\big(\phi_{\Sigma}(\cdot,X)^{-1}(+),\phi_{\Sigma}(\cdot,X)^{-1}(-)\big)\big| X\in\calc(M)\big\}$ and $\co^*=\big\{\pm\big(\phi_{\Sigma}^*(\cdot,Y)^{-1}(+),\phi_{\Sigma}^*(\cdot,Y)^{-1}(-)\big)\big| Y\in\calc(M)\big\}$. It can be easily checked that $(E(M),\co,\co^*)$ determines a dual pair of weakly orientable matroids with $\underline{\co}=\calc(M)$ and $\underline{\co^*}=\calc^*(M)$. Conversely, given a dual pair of weakly orientable matroids with $\underline{\co}=\calc(M)$ and $\underline{\co^*}=\calc^*(M)$ and $E=E(M)$, for every $X\in\calc(M)$ choose an $X'\in\co$ with $\underline{X'}=X$ and similarly for every $Y\in\calc^*(M)$ choose a $Y'\in\co$ with $\underline{Y'}=Y$. Define for every $X\in\calc(M)$ and $e\in E$,
\[
\phi_{\Sigma}(e,X)=\begin{cases} 0 & e\notin X,\\
+ & e\in (X')^+,\\
- & e\in (X')^-.
\end{cases}
\]
Defining $\phi_{\Sigma}^*$ similarly, we have a  $\Sigma$-mapping that satisfies the conditions of the lemma.
\end{proof}

We now define what we call the  \emph{Bland-Jensen (linear) system} (named in recognition of the seminal work of R.G. Bland and D.L. Jensen).

\begin{thm}\label{eq:polyweak} Let $M$ be a matroid. We define variables $a_{e,X}$ for each pair $X\in \calc(M)$ and $e\in X$ and  variables $b_{e,Y}$
for each pair $Y\in \calc^*(M)$ and $e\in Y$. Then consider the linear system over $F_2$ consisting of the equations
\[
a_{e,X}+b_{e,Y}+a_{f,X}+b_{f, Y}+1=0, \tag{$g_{X,Y}$}
\]
for every $X\in \calc(M)$ and $Y\in \calc^*(M)$ such that $X\cap Y=\{e,f\}$. Then $M$ is weakly-orientable if and only if this linear system is feasible over $F_2$.
\end{thm}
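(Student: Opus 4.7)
The plan is to reduce the statement to Lemma \ref{eq:weakorlem} and then encode the $\Sigma$-mapping directly as an $F_2$-assignment.

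First I would set up the bijective correspondence between $\Sigma$-mappings and $\{0,1\}$-assignments to the Bland-Jensen variables. Since $\phi_\Sigma(e,X)\in\{+,-\}$ precisely when $e\in X$, and the variable $a_{e,X}$ is declared only in that case (analogously for $b_{e,Y}$), one can set $a_{e,X}:=0$ when $\phi_\Sigma(e,X)=+$ and $a_{e,X}:=1$ when $\phi_\Sigma(e,X)=-$, and similarly for $b_{e,Y}$. Under this encoding, $\phi_\Sigma(e,X)$ and $\phi_\Sigma^*(e,Y)$ get identified (via $\pm\mapsto (-1)^{(\cdot)}$) with elements of $\{\pm1\}$, and the product $\phi_\Sigma(e,X)\cdot\phi_\Sigma^*(e,Y)$ equals $(-1)^{a_{e,X}+b_{e,Y}}$ whenever $e\in X\cap Y$.

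Next I would translate the condition appearing in Lemma \ref{eq:weakorlem}. When $X\cap Y=\{e,f\}$, both products $\phi_\Sigma(e,X)\phi_\Sigma^*(e,Y)$ and $\phi_\Sigma(f,X)\phi_\Sigma^*(f,Y)$ are nonzero, so the biconditional ``some element gives a positive product iff some element gives a negative product'' simplifies to the requirement that the two products have \emph{opposite} signs. In the $F_2$-encoding this becomes
\[
(-1)^{a_{e,X}+b_{e,Y}}\cdot(-1)^{a_{f,X}+b_{f,Y}}=-1,
\]
which is equivalent to $a_{e,X}+b_{e,Y}+a_{f,X}+b_{f,Y}\equiv 1\pmod 2$, i.e.\ to the single equation $g_{X,Y}$.

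Putting these together, a $\Sigma$-mapping satisfying the orthogonality condition of Lemma \ref{eq:weakorlem} for \emph{every} pair $(X,Y)$ with $|X\cap Y|=2$ is the same data as a solution of the Bland-Jensen system over $F_2$. Combined with Lemma \ref{eq:weakorlem}, this gives the desired equivalence in both directions.

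There is no real obstacle here beyond bookkeeping: the argument is a direct encoding of signs as $F_2$-bits, with the only subtle point being that the restricted orthogonality for $|X\cap Y|=2$ collapses exactly to the parity condition appearing in $g_{X,Y}$. One should, however, remark that the bijection is not quite onto $\Sigma$-mappings up to the sign ambiguity $X\mapsto -X$ (any two $F_2$-solutions related by flipping all $a_{e,X}$ for a fixed $X$ induce signings of the same circuit up to $\pm$), but this ambiguity is harmless since Definition \ref{weakdef2} only requires \emph{some} signing, so the forward and reverse translations may be performed without any canonical choice.
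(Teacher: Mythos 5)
Your proposal is correct and follows essentially the same route as the paper: both reduce the statement to Lemma \ref{eq:weakorlem} and encode signs as $F_2$-bits (your $\pm\mapsto(-1)^{(\cdot)}$ encoding is exactly the paper's isomorphism $\pi:(\{+,-\},\cdot)\to(F_2,+)$), translating the two-element orthogonality condition into the parity equation $g_{X,Y}$, with off-circuit values of the $\Sigma$-mapping filled in by zeros. The closing remark about sign ambiguity is unnecessary but harmless.
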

\begin{proof} Let $\pi: (\{+,-\},\cdot)\rightarrow (F_2,+)$ be the natural group isomorphism from the multiplicative group on $\{+,-\}$ to the underlying addive group of $F_2$. For a given $\Sigma$-mapping  $(\phi_{\Sigma},\phi_{\Sigma}^*)$ it satisfies the conditions of Lemma \ref{eq:weakorlem} if and only if for every $X\in \calc(M)$ and $Y\in \calc^*(M)$ such that $X\cap Y=\{e,f\}$ we have
$$\phi_{\Sigma}(e,X)\phi^*_{\Sigma}(e,Y)\phi_{\Sigma}(f,X)\phi_{\Sigma}^*(f,Y)=-\spc\iff$$
$$\pi\big(\phi_{\Sigma}(e,X)\phi^*_{\Sigma}(e,Y)\phi_{\Sigma}(f,X)\phi_{\Sigma}^*(f,Y)\big)=\pi(-)=1\spc \iff$$
$$\pi(\phi_{\Sigma}(e,X))+\pi(\phi^*_{\Sigma}(e,Y))+\pi(\phi_{\Sigma}(f,X))+\pi(\phi_{\Sigma}^*(f,Y\big))+1=0.$$
Therefore making the association $a_{e,X}=\pi(\phi_{\Sigma}(e,X))$, $b_{e,Y}=\pi(\phi^*_{\Sigma}(e,Y))$ for all our variables (and in the direction
of going from variables to a $\Sigma$-mapping, filling in the remaining values of our $\Sigma$-mapping with zeros), the result is clear.
\end{proof}

We now use this to produce a full proof of Theorem \ref{eq:weakeq} and show that the above linear system is functionally equivalent to the one alluded to
by Bland and Jensen. We require the following lemma which is a simple case of the Fredholm Alternative Theorem (see, for example, \cite{Schrijver1986}) and leave the details for
the reader:

\begin{lem}\label{eq:bifark} Let $A$ be an $m\times n$ matrix over $F_2$ and let $b\in F_2^m$. There exists no solution to the system $Ax=b$ if and only if there exists $y\in F_2^m$ such that $y^T A={\bf 0}$ and $y^T b=1$.
\end{lem}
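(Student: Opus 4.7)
The plan is to prove this as a standard instance of the Fredholm alternative, specialized to the field $F_2$. The statement is purely linear-algebraic, so the strategy is to translate feasibility of $Ax=b$ into membership of $b$ in the column space of $A$, and non-feasibility into the existence of a separating linear functional.

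First I would dispatch the easy direction. Suppose there exists $y\in F_2^m$ with $y^T A = {\bf 0}$ and $y^T b = 1$. Then for any candidate solution $x\in F_2^n$ we would have $y^T(Ax) = (y^T A)x = 0$, whereas $y^T b = 1$. Since $0\neq 1$ in $F_2$, the equation $Ax = b$ has no solution. This direction needs no special feature of $F_2$.

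For the forward direction, assume $Ax = b$ has no solution over $F_2$. Let $V \subseteq F_2^m$ denote the column space of $A$; the assumption is exactly that $b\notin V$. The key step is to produce a linear functional on $F_2^m$ that vanishes on $V$ but takes the value $1$ at $b$. To do this, I would pick a basis of $V$, extend it by $b$ (which is possible since $b\notin V$), and then further extend to a basis of all of $F_2^m$. Define $y^T : F_2^m \to F_2$ as the unique linear functional that sends $b$ to $1$ and sends every other basis vector to $0$; identifying $y^T$ with its coordinate vector $y\in F_2^m$ gives the required element. By construction every column of $A$ lies in $V$, so $y^T A = {\bf 0}$, and $y^T b = 1$ by design.

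There is no real obstacle here: the argument is the standard "separate a point from a subspace by a hyperplane" construction, and it works over any field (the restriction to $F_2$ only simplifies the statement, since the only nonzero scalar is $1$). The slight care needed is simply to justify that a basis of $V$ together with $b$ is linearly independent, which follows immediately from $b\notin V$, and that any linearly independent set can be extended to a basis of the ambient space, which is immediate in finite dimension.
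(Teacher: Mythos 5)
Your proof is correct. The paper does not actually supply an argument here --- it notes the lemma is a special case of the Fredholm Alternative (citing Schrijver) and leaves the details to the reader --- and your argument is precisely the standard one filling in those details: the easy direction by applying $y^T$ to $Ax=b$, and the forward direction by extending a basis of the column space $V$ by $b$ (legitimate since $b\notin V$) and defining the separating functional on the resulting basis. A minor virtue worth noting: by constructing the functional via a basis extension rather than via orthogonal complements, you avoid any appeal to inner-product geometry, which is the right move over $F_2$ where the standard bilinear form admits self-orthogonal vectors; dimension-counting with $V^{\perp}$ would also work but needs that extra care.
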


\begin{proof}[Proof of  Theorem \ref{eq:weakeq}:] A matroid $M$ is non-weakly-orientable if and only
if the linear system over $F_2$ in Theorem \ref{eq:polyweak} (henceforth denoted $Ax=b$) is infeasible.
 This is the case, according to Lemma \ref{eq:bifark},
 if and only if there exists a vector $y$ such that $y^TA={\bf 0}$ and $y^Tb=1$.

Assume that such a $y$ exists; each component where $y$ is 1 identifies an equation of
our linear system, which in turn corresponds to a circuit/cocircuit pair $(X_i,Y_i)$ with $|X_i\cap Y_i|=2$.
Because $y^Tb=1$, we have that there is an odd number of equations so identified.
Further, because $y^TA={\bf 0}$, we have that $|\{i ~\big|~ y_i=1\text{ and } A_{ij}=1\}|$ is even for all $j$.
This means that for any pair $X_i$ and $e\in X_i$ which corresponds to some variable $a_{e,X_i}$ and
hence some $j$, that $|\{i ~\big|~ X_i=X, e\in X_i\cap Y_i\}|$ is even. Similarly for any pair $Y_i$ and
$e\in Y_i$ we have $|\{i ~\big|~ Y_i=Y, e\in X_i\cap Y_i\}|$ is even. Thus a list as described in
Theorem \ref{eq:weakeq} exists.

Conversely, assume such a list exists. Then, by letting $y$ be the vector that is one for all equations
corresponding to pairs $(X_i,Y_i)$ in this list and zero elsewhere, it is  straightforward to check that the
specifics of this list imply that $y^TA={\bf 0}$ and $y^Tb=1$.

Therefore $M$ is non-weakly-orientable if and only if a list of the type described in Theorem \ref{eq:weakeq} exists, which completes the proof.
\end{proof}

Later, in Section \ref{tablesofwoms}, we will use the Bland-Jensen linear system to classify which matroids of small sizes are weakly-orientable.

\section{Extending the Bland-Jensen system for orientability testing}

We begin by recalling the definition of orientability in terms of $\Sigma$-mappings from the previous section, which is easily seen to be equivalent to other definitions of orientability (see \cite{Lee}).

\begin{defn}\label{eq:ordef} A matroid $M$ is \emph{orientable} if $M$ has a $\Sigma$-mapping $(\phi_{\Sigma},\phi_{\Sigma}^*)$ such that for every pair $X\in \calc(M)$ and $Y\in \calc^*(M)$ where $|X\cap Y|\in\{2,3\}$, then the following holds:
$$\phi_{\Sigma}(e,X)\cdot\phi_{\Sigma}^*(e,Y)>0,\text{ for some }e\in X\cap Y$$
$$\iff\spc\spc\phi_{\Sigma}(f,X)\cdot\phi_{\Sigma}^*(f,Y)<0,\text{ for some }f\in X\cap Y.$$
\end{defn}
Similar to weak orientability, the relationship between signed circuits and cocircuits above corresponds to the usual orthogonality axiom in oriented matroid theory. Notably, for orientability, although it suffices to look at circuits and cocircuits with intersections of just two or three elements, the orthogonality property holds for all  pairs of circuits and cocircuits (see \cite[p.~118]{Orientmat}).

We aim to take the linear system associated to a matroid $M$ in Theorem \ref{eq:polyweak}, and extend it to a system of polynomial equations the solvability of which is equivalent to orientability of $M$. It is well-known that polynomials play an important role in the theory of oriented matroids (see \cite{bokowskisturmfelsbook,Orientmat}). E.g., we know the Grassmann-Pl\"ucker equations, describing  Grassmann variety, allows us to give description of \emph{realizable} matroids or oriented matroids. 
But the first in-depth study of polynomial systems describing  all oriented matroids was done by Bokowski, Richter-Gebert and Guedes de Oliveira in 
\cite{BORGGO}. They described an algebraic variety over $F_2$ whose points correspond to all matroids of given rank on a given finite set, and similarly for $F_3$ and oriented matroids. Although their equations have a different set of variables (they use bases, instead of circuits and cocircuits to label variables). An interesting open question is whether
one can derive a natural equivalence between the equations in \cite{BORGGO} and our equations presented next:

\begin{thm}\label{eq:poly2} Define the variables $a_{e,X}$ for each pair $X\in \calc(M)$ and $e\in X$, and define the variables $b_{e,Y}$ for each pair $Y\in \calc^*(M)$ and $e\in Y$. Then let $R$ be the polynomial ring on these variables over the field $F_2$. We consider the following polynomial equations in $R$:

\smallskip
For every $X\in \calc(M)$ and $Y\in \calc^*(M)$ such that $X\cap Y=\{e,f\}$~,
\[
\tag{$g_{X,Y}$} a_{e,X}+b_{e,Y}+a_{f,X}+b_{f, Y}+1=0,
\]
and for every $X\in \calc(M)$ and $Y\in \calc^*(M)$ such that $X\cap Y=\{e,f,g\}$~,
\[
1+a_{e,X}+b_{e,Y}+a_{f,X}+b_{f, Y}+a_{g,X}+b_{g, Y}+a_{e,X}a_{f,X}+a_{e,X}a_{g,X}+a_{e,X}b_{f,Y}+a_{e,X}b_{g,Y}+a_{f,X}a_{g,X}
\]
\[
\tab +a_{f,X}b_{e,Y}+a_{f,X}b_{g,Y}+a_{g,X}b_{e,Y}+a_{g,X}b_{f,Y}+b_{e,Y}b_{f,Y}+b_{e,Y}b_{g,Y}+b_{f,Y}b_{g,Y}=0.\tag{$h_{X,Y}$}
\]
\smallskip

Then $M$ is orientable if and only if this system of polynomial equations over $F_2$ is feasible.
\end{thm}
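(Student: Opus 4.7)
The plan is to mimic the proof of Theorem \ref{eq:polyweak}, extending the $F_2$-translation of the orthogonality "iff" to the new case $|X\cap Y|=3$ required by Definition \ref{eq:ordef}. As before, I identify a $\Sigma$-mapping with the pair $(a_{e,X}, b_{e,Y})$ of $F_2$-values via the natural group isomorphism $\pi\colon(\{+,-\},\cdot)\to(F_2,+)$: the sign product $\phi_{\Sigma}(e,X)\phi_{\Sigma}^*(e,Y)$ at an element $e\in X\cap Y$ corresponds to $a_{e,X}+b_{e,Y}\in F_2$, with the value $0$ encoding agreement in sign and $1$ encoding disagreement. The case $|X\cap Y|=2$ is handled verbatim by Theorem \ref{eq:polyweak}, yielding the linear relations $(g_{X,Y})$.

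For the new case $X\cap Y=\{e,f,g\}$, the key observation is that the orthogonality "iff" fails precisely in the two "unanimous" scenarios: $X$ and $Y$ agree on all three elements, or disagree on all three. Writing $u=a_{e,X}+b_{e,Y}$, $v=a_{f,X}+b_{f,Y}$, $w=a_{g,X}+b_{g,Y}$, this amounts to requiring $(u,v,w)\notin\{(0,0,0),(1,1,1)\}$. I claim the right interpolating polynomial is
\[
F(u,v,w) \;:=\; 1+u+v+w+uv+uw+vw,
\]
since a direct check shows $F(0,0,0)=F(1,1,1)=1$ while $F$ vanishes at the other six points of $F_2^3$; thus the orthogonality condition at a triple-intersection pair $(X,Y)$ is equivalent to $F(u,v,w)=0$.

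Substituting the definitions of $u,v,w$ and expanding each quadratic product, e.g.\ $uv=(a_{e,X}+b_{e,Y})(a_{f,X}+b_{f,Y})=a_{e,X}a_{f,X}+a_{e,X}b_{f,Y}+a_{f,X}b_{e,Y}+b_{e,Y}b_{f,Y}$, and similarly for $uw$ and $vw$, reproduces term-for-term the polynomial equation $(h_{X,Y})$. This bookkeeping is the only real calculation and is entirely mechanical.

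Finally, the logical equivalence is closed exactly as in Theorem \ref{eq:polyweak}: a $\Sigma$-mapping witnessing orientability of $M$ yields a solution of the combined system $\{(g_{X,Y}),(h_{X,Y})\}$ by setting $a_{e,X}:=\pi(\phi_{\Sigma}(e,X))$ and $b_{e,Y}:=\pi(\phi_{\Sigma}^*(e,Y))$, while conversely any $F_2$-solution produces a $\Sigma$-mapping satisfying both orthogonality conditions by inverting $\pi$ on the support of each circuit and cocircuit and assigning $0$ elsewhere. The main (and essentially only) difficulty is discovering the correct quadratic indicator $F$; once that is in hand, everything else is inherited from the weak-orientability argument.
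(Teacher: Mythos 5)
Your proposal is correct and takes essentially the same approach as the paper: identify $\Sigma$-mappings with $F_2$-assignments via $\pi$, inherit the $|X\cap Y|=2$ case from Theorem \ref{eq:polyweak}, and show that $h_{X,Y}$ vanishes precisely when the three sign products at a triple intersection are not all equal. The only difference is cosmetic: the paper checks this last fact by noting that $h_{X,Y}$ agrees on $F_2$-points with the product of the three pairwise $g$-type linear forms, while you check it by a truth-table verification of the quadratic interpolant $F(u,v,w)=1+u+v+w+uv+uw+vw$ in the aggregated variables $u,v,w$; the two verifications are equivalent.
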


\begin{proof} First note for any $X\in \calc(M)$ and $Y\in \calc^*(M)$ with $X\cap Y=\{e,f,g\}$, we have over $F_2$ that
$$0=1+a_{e,X}+b_{e,Y}+a_{f,X}+b_{f, Y}+a_{g,X}+b_{g, Y}+a_{e,X}a_{f,X}+a_{e,X}a_{g,Y}+a_{e,X}b_{f,Y}+a_{e,X}b_{g,Y}+a_{f,X}a_{g,X}$$
$$\tab +a_{f,X}b_{e,Y}+a_{f,X}b_{g,Y}+a_{g,X}b_{e,Y}+a_{g,X}b_{f,Y}+b_{e,Y}b_{f,Y}+b_{e,Y}b_{g,Y}+b_{f,Y}b_{g,Y}$$
$$=(a_{e,X}+b_{e,Y}+a_{f,X}+b_{f,Y}+1)(a_{e,X}+b_{e,Y}+a_{g,X}+b_{g,Y}+1)(a_{g,X}+b_{g,Y}+a_{f,X}+b_{f,Y}+1)$$
if and only if one of the following holds:
\begin{align*}
a_{e,X}+b_{e,Y}+a_{f,X}+b_{f,Y}+1&=0,\\
a_{e,X}+b_{e,Y}+a_{g,X}+b_{g,Y}+1&=0,\\
a_{g,X}+b_{g,Y}+a_{f,X}+b_{f,Y}+1&=0.
\end{align*}
As in the proof of Theorem \ref{eq:polyweak}, let $\pi:(\{+,-\},\cdot)\rightarrow (F_2,+)$ be the natural isomorphism. We associate our variables to a $\Sigma$-mapping $(\phi_{\Sigma},\phi_{\Sigma}^*)$ by making the assignment $a_{e,X}=\pi(\phi_{\Sigma}(e,X))$, $b_{e,Y}=\pi(\phi_{\Sigma}(e,Y))$, and assigning all remaining values of the $\Sigma$-mapping to $0$ when going from variables to a $\Sigma$-mapping.

Under this association, by the proof of Theorem \ref{eq:polyweak}, we have the conditions of Definition \ref{eq:ordef} for a $\Sigma$-mapping and a pair $X\in\calc(M)$ and $Y\in\calc^*(M)$ with $|X\cap Y|=2$ are satisfied if and only if $g_{X,Y}=0$. Therefore it suffices to show that under this association the conditions of Definition \ref{eq:ordef} for a $\Sigma$-mapping  and a pair $X\in\calc(M)$ and $Y\in\calc^*(M)$ with $|X\cap Y|=3$ are satisfied if and only if $h_{X,Y}=0$. But by the above note, for a pair $X\in\calc(M)$ and $Y\in\calc^*(M)$ with $|X\cap Y|=3$, $h_{X,Y}=0$ if and only if there exists some pair $\{\alpha,\beta\}\subseteq X\cap Y$ such that
$$a_{\alpha,X}+b_{\alpha,Y}+a_{\beta,X}+b_{\beta,Y}+1=0,$$
which under our association, as shown in the proof of Theorem \ref{eq:polyweak}, is true if and only if
$$\phi_{\Sigma}(\alpha,X)\phi_{\Sigma}^*(\alpha,Y)\phi_{\Sigma}(\beta,X)\phi_{\Sigma}^*(\beta,Y)=-,$$
for some $\{\alpha,\beta\}\subseteq X\cap Y$. These are exactly the conditions of Definition \ref{eq:ordef} for a $\Sigma$-mapping  and a pair $X\in\calc(M)$ and $Y\in\calc^*(M)$ with $|X\cap Y|=3$. Thus we complete the proof.
\end{proof}

\begin{thm}\label{eq:poly} Define the variables $a_{e,X}$ for each pair $X\in \calc(M)$ and $e\in X$ and define the variables $b_{e,Y}$ for each pair $Y\in \calc^*(M)$ and $e\in Y$. Then let $R$ be the polynomial ring on these variables over a field $K$ (where $K$ is a field of characteristic not equal to $2$). We consider the following polynomials in $R$: for every $e\in X$ and $X\in \calc(M)$
\[
\tag{$p_{e,X}$} a_{e,X}^2-1=0,
\]
for every $e\in Y$ and $Y\in \calc^*(M)$
\[
\tag{$q_{e,Y}$} b_{e,Y}^2-1=0,
\]
for every $X\in \calc(M)$ and $Y\in \calc^*(M)$ such that $X\cap Y=\{e,f\}$
\[
\tag{$h'_{X,Y}$} a_{e,X}b_{e,Y}+a_{f,X}b_{f,Y}=0,
\]
and for every $X\in \calc(M)$ and $Y\in \calc^*(M)$ such that $X\cap Y=\{e,f,g\}$
\[
\tag{$h_{X,Y}$} a_{e,X}b_{e,Y}a_{f,X}b_{f,Y}+a_{f,X}b_{f,Y}a_{g,X}b_{g,Y}+a_{g,X}b_{g,Y}a_{e,X}b_{e,Y}+1=0.
\]
Then $M$ is orientable if and only if there exists a solution to this system of polynomials over the field $K$.
\end{thm}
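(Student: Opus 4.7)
The plan is to follow the template of the proofs of Theorems \ref{eq:polyweak} and \ref{eq:poly2}, but in multiplicative rather than additive form, so that signs are encoded directly as $\pm 1 \in K$ instead of as $0,1 \in F_2$. Since $\mathrm{char}(K) \neq 2$, the equations $p_{e,X}$ and $q_{e,Y}$ force each variable to take a value in $\{+1, -1\} \subseteq K$. I would begin by identifying $\{+1, -1\}$ with the sign set $\{+, -\}$ and, given a solution, constructing a $\Sigma$-mapping via $\phi_{\Sigma}(e, X) := a_{e,X}$ if $e \in X$ (and $0$ otherwise), and analogously $\phi_{\Sigma}^*(e, Y) := b_{e,Y}$ if $e \in Y$ (and $0$ otherwise). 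Conversely, any $\Sigma$-mapping yields an assignment of the $a$'s and $b$'s by reading off the signs of $\phi_\Sigma$ and $\phi_\Sigma^*$ on their support. It then suffices to show that, under this correspondence, the remaining equations $h'_{X,Y}$ and $h_{X,Y}$ encode exactly the orthogonality conditions of Definition \ref{eq:ordef} for intersections of size two and three, respectively.

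For pairs $(X,Y)$ with $X \cap Y = \{e,f\}$, set $s_\alpha := a_{\alpha,X} b_{\alpha,Y} \in \{+1,-1\}$. The orthogonality condition says exactly that $s_e$ and $s_f$ have opposite signs, i.e., $s_e + s_f = 0$, which is precisely $h'_{X,Y} = 0$. This step is immediate and parallels the corresponding step in the proof of Theorem \ref{eq:polyweak}.

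The main (though not deep) obstacle is the analogous claim for $|X \cap Y| = 3$. With $s_e, s_f, s_g \in \{+1, -1\}$ defined as above, the orthogonality condition says that not all three $s_\alpha$ coincide. My plan is to verify by a short case analysis that $h_{X,Y} = s_e s_f + s_f s_g + s_g s_e + 1$ vanishes in $K$ precisely in this situation. If all $s_\alpha$ are equal, each pairwise product is $+1$ and $h_{X,Y} = 4$, which is nonzero since $\mathrm{char}(K) \neq 2$. If the $s_\alpha$ are not all equal, then exactly one pair among $\{e,f\}, \{f,g\}, \{g,e\}$ has matching signs (contributing $+1$) and the other two pairs each contribute $-1$, giving $h_{X,Y} = 1 - 1 - 1 + 1 = 0$. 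Equivalently, one may use the identity $2 h_{X,Y} = (s_e + s_f + s_g)^2 - 1$ for $s_\alpha \in \{\pm 1\}$, which makes the equivalence transparent and again uses only $\mathrm{char}(K) \neq 2$.

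Combining the two equivalences with the bijection between solutions of the polynomial system and $\Sigma$-mappings yields the theorem: the system is feasible over $K$ if and only if there exists a $\Sigma$-mapping satisfying all orthogonality conditions on pairs with $|X \cap Y| \in \{2,3\}$, i.e., if and only if $M$ is orientable. The hypothesis $\mathrm{char}(K) \neq 2$ is used only to distinguish $+1$ from $-1$ in $K$ and to ensure $4 \neq 0$ when ruling out the uniform-sign cases of $h_{X,Y}$.
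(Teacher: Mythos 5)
Your proposal is correct and follows essentially the same route as the paper's proof: identify $\pm 1$ solutions with $\Sigma$-mappings, note that $h'_{X,Y}=0$ encodes the two-element orthogonality since $\pm 2\neq 0$, and handle the three-element case via the identity $2h_{X,Y}=(s_e+s_f+s_g)^2-1$ (the paper uses exactly this identity, comparing the values $\pm1$ versus $\pm3$; your direct case analysis is an equivalent shortcut). No gaps.
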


\begin{proof} \fussy Assume $M$ is orientable, and let $(\phi_{\Sigma},\phi_{\Sigma}^*$) be its given $\Sigma$-mapping. For every $e\in X$ and $X\in \calc(M)$, we let $a_{e,X}=1$ if $\phi_{\Sigma}(e,X)=+$ and $a_{e,X}=-1$ if $\phi_{\Sigma}(e,X)=-$, we note that because $e\in X$ this implies $\phi_{\Sigma}(e,X)\neq 0$.

\fussy Since $1^2=1=(-1)^2$ in every field $K$, it is clear that for these choices of $a$'s and $b$'s $p_{e,X}=0$ for all $e\in X$ and $X\in \calc(M)$ and $q_{e,Y}=0$ for all $e\in Y$ and $Y\in \calc^*(M)$.

\fussy Then if $X\in \calc(M)$ and $Y\in \calc^*(M)$ with $X\cap Y=\{e,f\}$, because $\phi_{\Sigma}(e,X),\phi_{\Sigma}^*(e, Y)\neq 0$ (as $e\in X,Y$) we have that $\phi_{\Sigma}(e,X)\phi_{\Sigma}^*(e,Y)>0$ or $\phi_{\Sigma}(e,X)\phi_{\Sigma}^*(e,Y)<0$ and therefore that $\phi_{\Sigma}(f,X)\phi_{\Sigma}^*(f,Y)=-\phi_{\Sigma}(e,X)\phi_{\Sigma}^*(e,Y)$ because $M$ is orientable. Therefore  $a_{e,X}b_{e,Y}=1$ or $a_{e,X}b_{e,Y}=-1$ and $a_{f,X}b_{f,Y}=-a_{e,X}b_{e,Y}$, which implies that
$$h'_{X,Y}=a_{f,X}b_{f,Y}+a_{e,X}b_{e,Y}=0.$$

\fussy Then if $X\in \calc(M)$ and $Y\in \calc^*(M)$ with $X\cap Y=\{e,f,g\}$, we have that $a_{e,X}b_{e,Y}$, $a_{f,X}b_{f,Y}$,$a_{g,X}b_{g,Y}=\pm 1$. However because $M$ is orientable we have that any combination is possible except if they are all $1$ or all $-1$, because this would contradict that $M$ is orientable with our given $\Sigma$-mapping, and therefore it is clear that
$$a_{e,X}b_{e,Y}+a_{f,X}b_{f,Y}+a_{g,X}b_{g,Y}=\pm 1.$$
Hence we always have that,
$$0=(a_{e,X}b_{e,Y}+a_{f,X}b_{f,Y}+a_{g,X}b_{g,Y}-1)(a_{e,X}b_{e,Y}+a_{f,X}b_{f,Y}+a_{g,X}b_{g,Y}+1)$$
$$=(a_{e,X}b_{e,Y}+a_{f,X}b_{f,Y}+a_{g,X}b_{g,Y})^2-1$$
$$=a_{e,X}^2b_{e,Y}^2+a_{f,X}^2b_{f,Y}^2+a_{g,X}^2b_{g,Y}^2+2a_{e,X}b_{e,Y}a_{f,X}b_{f,Y}+2a_{f,X}b_{f,Y}a_{g,X}b_{g,Y}+2a_{g,X}b_{g,Y}a_{e,X}b_{e,Y}-1$$
$$=1^21^2+1^21^2+1^21^2+2a_{e,X}b_{e,Y}a_{f,X}b_{f,Y}+2a_{f,X}b_{f,Y}a_{g,X}b_{g,Y}+2a_{g,X}b_{g,Y}a_{e,X}b_{e,Y}-1$$
$$=2a_{e,X}b_{e,Y}a_{f,X}b_{f,Y}+2a_{f,X}b_{f,Y}a_{g,X}b_{g,Y}+2a_{g,X}b_{g,Y}a_{e,X}b_{e,Y}+2,$$

and so (because the characteristic of $K$ is not 2), dividing by $2$ we get that
$$0=a_{e,X}b_{e,Y}a_{f,X}b_{f,Y}+a_{f,X}b_{f,Y}a_{g,X}b_{g,Y}+a_{g,X}b_{g,Y}a_{e,X}b_{e,Y}+1=h_{X,Y}.$$
Thus we have our desired solution to our system of equations.
\\

\fussy Conversely, assume we have a zero solution to our system of equations. Then, because for all $e\in X$ and $X\in \calc(M)$ we have $a_{e,X}^2-1=0$, this implies that $a_{e,X}=\pm 1$ and similarly $b_{e,Y}=\pm 1$ for all $e\in Y$ and $Y\in \calc^*(M)$. Then we define a $\Sigma$-mapping of $M$ as follows: for all $X\in \calc(M)$, if $e\in X$ let $\phi_{\Sigma}(e,X)=\text{sgn}(a_{e,X})$ and $\phi_{\Sigma}(e,X)=0$ if $e$$\notin$$X$ and similarly for all $Y\in \calc^*(M)$, if $e\in Y$ let $\phi_{\Sigma}^*(e,Y)=\text{sgn}(b_{e,Y})$ and $\phi_{\Sigma}^*(e,Y)=0$ if $e$$\notin$$Y$, where $\text{sgn}(1)=+$ and $\text{sgn}(-1)=-$ (we note that because $K$ is not a characteristic-2 field, this is well-defined). It is clear by definition that this is a $\Sigma$-mapping.
\fussy Then we must show this mapping satisfies the given condition to show that $M$ is orientable. Suppose there exists an $X\in \calc(M)$, $Y\in \calc^*(M)$ where the given condition is not satisfied.

\fussy First case: If $|X\cap Y|=2$, $X\cap Y=\{e,f\}$. Because we have that $\phi_{\Sigma}(e,X)\phi_{\Sigma}^*(e,Y),$ $\phi_{\Sigma}(f,X)\phi_{\Sigma}^*(f,Y)\neq 0$ (as $e,f\in X\cap Y$), this implies that they are both positive or both negative, which would imply (because our sgn function is multiplicative as defined) that $a_{e,X}b_{e,Y}$ and $a_{f,X}b_{f,Y}$ are both $1$ or both $-1$. Because $K$ is not of characteristic 2 we would have that
$$h'_{X,Y}=a_{e,X}b_{e,Y}+a_{f,X}b_{f,Y}=\pm 2\neq 0,$$
which would be a contradiction.

\fussy Second case: If $|X\cap Y|=3$, $X\cap Y=\{e,f,g\}$. Because we have that $\phi_{\Sigma}(e,X)\phi_{\Sigma}^*(e,Y),$ $\phi_{\Sigma}(f,X)\phi_{\Sigma}^*(f,Y),$
and $\phi_{\Sigma}(g,X)\phi_{\Sigma}^*(g,Y)\neq 0$, then they all must be positive or all must be negative and thus
$a_{e,X}b_{e,Y}$, $a_{f,X}b_{f,Y}$ and $a_{g,X}b_{g,Y}$ are all $1$ or all $-1$, and therefore
$$a_{e,X}b_{e,Y}+a_{f,X}b_{f,Y}+a_{g,X}b_{g,Y}=\pm 3.$$
Thus
$$a_{e,X}b_{e,Y}+a_{f,X}b_{f,Y}+a_{g,X}b_{g,Y}+1=-2\text{ or }4,$$
$$a_{e,X}b_{e,Y}+a_{f,X}b_{f,Y}+a_{g,X}b_{g,Y}-1=-4\text{ or }2,$$
and because $K$ is a field (necessarily with prime characteristic) not of characteristic $2$, then $-4,-2,2,4\neq 0$, and hence
$$0\neq (a_{e,X}b_{e,Y}+a_{f,X}b_{f,Y}+a_{g,X}b_{g,Y}-1)(a_{e,X}b_{e,Y}+a_{f,X}b_{f,Y}+a_{g,X}b_{g,Y}+1)$$
$$=(a_{e,X}b_{e,Y}+a_{f,X}b_{f,Y}+a_{g,X}b_{g,Y})^2-1$$
$$=a_{e,X}^2b_{e,Y}^2+a_{f,X}^2b_{f,Y}^2+a_{g,X}^2b_{g,Y}^2+2a_{e,X}b_{e,Y}a_{f,X}b_{f,Y}+2a_{f,X}b_{f,Y}a_{g,X}b_{g,Y}+2a_{g,X}b_{g,Y}a_{e,X}b_{e,Y}-1$$
$$=1^21^2+1^21^2+1^21^2+2a_{e,X}b_{e,Y}a_{f,X}b_{f,Y}+2a_{f,X}b_{f,Y}a_{g,X}b_{g,Y}+2a_{g,X}b_{g,Y}a_{e,X}b_{e,Y}-1$$
$$=2a_{e,X}b_{e,Y}a_{f,X}b_{f,Y}+2a_{f,X}b_{f,Y}a_{g,X}b_{g,Y}+2a_{g,X}b_{g,Y}a_{e,X}b_{e,Y}+2$$
$$=2(a_{e,X}b_{e,Y}a_{f,X}b_{f,Y}+a_{f,X}b_{f,Y}a_{g,X}b_{g,Y}+a_{g,X}b_{g,Y}a_{e,X}b_{e,Y}+1)=2h_{X,Y}=0,$$
which is again a contradiction. Thus we get that $M$ must be orientable.
\end{proof}

\noindent {\bf Remark:} It is important to notice that the subsystem consisting of all the equations with the exception of the $h_{X,Y}$ polynomials gives
a system that is feasible if and only  if the matroid is weakly-orientable. This is a degree-two system, and, as such, it is typically harder to solve than
the Bland-Jensen linear system.

\begin{cor}\label{eq:polyup} If the polynomial $h'_{X,Y}$ for every $X\in \calc(M)$ and $Y\in \calc^*(M)$ such that $X\cap Y=\{e,f\}$ is replaced with the polynomial equation
\[
\tag{$h_{X,Y}$} a_{e,X}b_{e,Y}a_{f,X}b_{f,Y}+1=0,
\]
in the system described in Theorem \ref{eq:poly}, then $M$ is orientable if and only if this new system has a solution over the field $K$.
\end{cor}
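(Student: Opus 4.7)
The plan is to reduce the corollary directly to Theorem \ref{eq:poly}. The modified system keeps the same variables, the same squaring polynomials $p_{e,X}$ and $q_{e,Y}$, and the same three-element-intersection polynomials $h_{X,Y}$; only the two-element-intersection polynomial has been replaced. So it suffices to show that, over the field $K$ and in the presence of the squaring constraints, the common zero set of the $p$'s, $q$'s, and the original $h'_{X,Y}=a_{e,X}b_{e,Y}+a_{f,X}b_{f,Y}$ coincides with the common zero set of the $p$'s, $q$'s, and the new polynomial $a_{e,X}b_{e,Y}a_{f,X}b_{f,Y}+1$.

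The key observation is that $p_{e,X}=p_{f,X}=q_{e,Y}=q_{f,Y}=0$ forces $a_{e,X},b_{e,Y},a_{f,X},b_{f,Y}\in\{\pm 1\}$ (well-defined since $K$ has characteristic different from $2$). Consequently $a_{e,X}b_{e,Y}$ and $a_{f,X}b_{f,Y}$ are both $\pm 1$, and saying that their sum is zero is the same as saying their product is $-1$. To make this into a clean algebraic argument rather than a case check, I would use the identity
\[
a_{f,X}b_{f,Y}\bigl(a_{e,X}b_{e,Y}+a_{f,X}b_{f,Y}\bigr) = a_{e,X}b_{e,Y}a_{f,X}b_{f,Y} + a_{f,X}^2 b_{f,Y}^2,
\]
which shows that, modulo $p_{f,X}$ and $q_{f,Y}$, multiplying $h'_{X,Y}$ by the unit $a_{f,X}b_{f,Y}$ yields $a_{e,X}b_{e,Y}a_{f,X}b_{f,Y}+1$, and multiplying a second time (and reducing again) returns $h'_{X,Y}$. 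Hence, at any point satisfying the squaring constraints, one polynomial vanishes if and only if the other does.

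Putting these pieces together, the modified system has a solution over $K$ if and only if the system of Theorem \ref{eq:poly} does, and the latter is equivalent to $M$ being orientable. There is no serious obstacle here; the only care needed is to run the substitution argument in both directions, so as not to leave one implication unproven, and to record that the cancellations implicitly use characteristic not equal to $2$ in the same way Theorem \ref{eq:poly} does.
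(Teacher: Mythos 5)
Your proof is correct, and it is a genuine (if mild) variation on the paper's argument rather than a copy of it. Both proofs reduce the corollary to the observation that, on the common zero set of the squaring polynomials $p_{e,X}$ and $q_{e,Y}$, the polynomial $h'_{X,Y}=a_{e,X}b_{e,Y}+a_{f,X}b_{f,Y}$ vanishes if and only if $a_{e,X}b_{e,Y}a_{f,X}b_{f,Y}+1$ does, so the two systems have identical solution sets; the difference lies in the identity used to see this. The paper squares: since $K$ is a field, $h'_{X,Y}=0$ iff $(h'_{X,Y})^2=0$, and expanding and reducing by the squaring constraints gives $(h'_{X,Y})^2=2+2\,a_{e,X}b_{e,Y}a_{f,X}b_{f,Y}$, after which one divides by $2$, explicitly invoking that the characteristic of $K$ is not two. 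You instead multiply $h'_{X,Y}$ by $a_{f,X}b_{f,Y}$, which the constraints force to be $\pm 1$ and hence a unit, and note that $a_{f,X}b_{f,Y}\,h'_{X,Y}\equiv a_{e,X}b_{e,Y}a_{f,X}b_{f,Y}+1$ modulo $p_{f,X}$ and $q_{f,Y}$. What this buys is that no division by $2$ is needed at this step, so the equivalence of the two systems holds over any field once the squaring constraints are imposed; the characteristic hypothesis enters only because Theorem \ref{eq:poly} itself requires it (and your remark that the characteristic is needed to conclude $a\in\{\pm 1\}$ from $a^2=1$ is superfluous, since that implication holds in every field). You also correctly run the argument in both directions, since every solution of either system satisfies the $p$'s and $q$'s.
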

\begin{proof} We have a solution over a field $K$ to our polynomial system in \ref{eq:poly} if and only if  for every $X\cap Y=\{e,f\}$ we have that
$$0=(a_{e,X}b_{e,Y}+a_{f,X}b_{f,Y})^2=a_{e,X}^2b_{e,Y}^2+2a_{e,X}b_{e,Y}a_{f,X}b_{f,Y}+a_{f,X}^2b_{f,Y}^2$$
$$=1^21^2+2a_{e,X}b_{e,Y}a_{f,X}b_{f,Y}+1^21^2=2+2a_{e,X}b_{e,Y}a_{f,X}b_{f,Y}~.$$
Because two is invertible in $K$ (since we are in a field not of characteristic two), this is true if and only if
\begin{align*}
h_{X,Y}&=a_{e,X}b_{e,Y}a_{f,X}b_{f,Y}+1=0.
\end{align*}
\end{proof}

Thus, by Corollary \ref{eq:polyup}, we have that the polynomials $h_{X,Y}$ and $h'_{X,Y}$, in Theorem \ref{eq:poly}, may be replaced with a single concise set of polynomials as follows: if $X\in \calc(M)$ and $Y\in \calc^*(M)$ with $|X\cap Y|\in\{2,3\}$, then
$$h_{X,Y}=1+\sum_{\{e,f\}\in X\cap Y}a_{e,X}b_{e,Y}a_{f,X}b_{f,Y}.$$

We observe that our polynomial systems behave well with respect to duality and minors:

\begin{lem}\label{eq:dual} Consider a matroid $M$
\begin{itemize}
\item If $M^*$ is its dual matroid, then for any field and the corresponding polynomial systems, described either in Theorems \ref{eq:poly} or \ref{eq:poly2}, for $M$ and $M^*$ are the same, up to relabeling of variables.
\item If $M'$ be a minor of $M$,  then for any field and the corresponding polynomial system, described either in Theorems \ref{eq:poly} or \ref{eq:poly2}, for $M'$ is a subsystem of the one associated to $M$, up to relabeling.
\end{itemize}
\end{lem}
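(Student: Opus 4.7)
The plan is to prove the two bullets separately. The duality claim will follow from an obvious symmetry of each polynomial system under interchange of circuits and cocircuits; the minor claim will then reduce, via duality, to a single-element deletion argument based on lifting cocircuits.

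For the duality bullet, first I would recall that $\calc(M^*)=\calc^*(M)$ and $\calc^*(M^*)=\calc(M)$ as collections of subsets of the common ground set. Then I would inspect each type of equation in Theorems~\ref{eq:poly2} and~\ref{eq:poly}---the $F_2$ relations $g_{X,Y}$ and $h_{X,Y}$, the squares $p_{e,X}$ and $q_{e,Y}$, and the characteristic-$\neq 2$ relations $h'_{X,Y}$ and $h_{X,Y}$---and observe that each is manifestly symmetric under the interchange $(X,a)\leftrightarrow (Y,b)$. So the relabeling $a_{e,X}\leftrightarrow b_{e,X}$, where on the left $X$ is viewed as a circuit of $M$ and on the right as a cocircuit of $M^*$, will identify the two systems equation for equation.

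For the minor bullet, any minor is a sequence of single-element deletions and contractions, so together with the duality already established and the identity $(M\setminus e)^*=M^*/e$, it suffices to handle the case $M'=M\setminus e$. Here I would use the standard descriptions $\calc(M\setminus e)=\{X\in\calc(M):e\notin X\}\subseteq\calc(M)$ and $\calc^*(M\setminus e)$ as the minimal nonempty members of $\{Y\setminus e:Y\in\calc^*(M)\}$. For each $Y'\in\calc^*(M\setminus e)$ I would fix a lift $\widehat Y(Y')\in\calc^*(M)$ with $\widehat Y(Y')\setminus e=Y'$, yielding the variable relabeling $a_{f,X'}\mapsto a_{f,X'}$ and $b_{f,Y'}\mapsto b_{f,\widehat Y(Y')}$. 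The key identity $X'\cap \widehat Y(Y')=X'\cap Y'$---an immediate consequence of $e\notin X'$---then sends every equation of the $M'$-system, whether of type $g$, $h$, $h'$, $p$, or $q$, to one of the same type in the $M$-system indexed by $(X',\widehat Y(Y'))$, exhibiting the $M'$-system as a subsystem, as required.

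The one delicate point---the closest thing to an obstacle---is that several cocircuits of $M$ may restrict to the same $Y'\in\calc^*(M\setminus e)$, so the lift $Y'\mapsto\widehat Y(Y')$ is not canonical. However, the lift is automatically injective (distinct $Y'$'s have distinct lifts, since $\widehat Y(Y')\setminus e=Y'$ recovers $Y'$), so no two variables of the $M'$-system get collapsed under the relabeling, and any choice of lift works.
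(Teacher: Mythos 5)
Your proposal is correct and is essentially the paper's own argument in dual-mirror form: the paper reduces to a single-element contraction $M/f$ (where cocircuits of the minor are literally cocircuits of $M$ and circuits must be lifted via $X'=X\setminus f$), while you reduce to a single-element deletion $M\setminus e$ (circuits literal, cocircuits lifted), with the same key observation that the relevant intersections are unchanged because the deleted/contracted element is absent from the untouched side. Both treat the duality bullet as an immediate symmetry of the equations, so there is no substantive difference in approach.
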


\begin{proof} The first statement is obvious. For the second, it clearly suffices to
check that the result hold when we contract a single element. Hence assume $M'=M/f$. Recall that $\calc(M/f)=\min\{C\setminus f\big| C\in\calc(M)\}$ and $\calc^*(M/f)=\calc(M^*\setminus f)=\{C\big| C\in\calc^*(M), f\notin C\}$ \cite{Ox}[~Sec. 3.1]. Let $X'\in\calc(M/f)$ and $Y'\in\calc^*(M/f)$, then $Y'\in\calc^*(M)$ and we may associate a $X\in\calc(M)$ to each $X'$ such that $X'=X\setminus f$.

We associate the variables $a'_{e,X'}$ and $b'_{e,Y'}$ in the system for $M'=M/f$ to the variables $a_{e,X}$and $b_{e,Y'}$ in the system for $M$. It is immediately clear that in the case of Theorem \ref{eq:poly} the $p'_{e,X'}$ and $q'_{e,Y}$ polynomials in the system for $M'$ are contained in the system for $M$. Further we have that $X'\cap Y'=X\cap Y'$ because $X'=X\setminus e$ and $e\notin Y'$, which immediately implies also that the remaining polynomials in the system for $M'$ are contained in the system for $M$.
\end{proof}

\subsection{Remarks on computational complexity}

The fact that there are many characterizations of matroids is a very appealing property. Each of these different characterizations
carries with it a natural way to encode the same matroid as input for algorithmic purposes, either explicitly or by an oracle:
 via its rank function, independent sets, bases, circuits, cocircuits, flats, by a matrix representation (when one exists), etc. In some
 cases, the difference in the sizes of  two encodings for a matroid can be dramatic. For example, if a matroid $M$ with ground set
 $E$ is representable over the rationals, the input can  just be an integer $r \times n$ matrix, where $r=rank(M)$ and $n=|E|$, and
 there is a fast procedure to decide when a subset is independent in $M$. Compare this encoding to an encoding of $M$ via a list
 of its circuits. The number of circuits of $M$ can be of order $O(n^{r+1})$.
This disparity between the different input sizes can give the wrong impression that with large encodings, such as the entire list
of bases, one is certain to have trivial polynomial-time complexity. But this is far from the truth. For example, Richter-Gebert proved
that deciding orientability for matroids is NP-complete even for co-rank-three matroids encoded by their bases (see \cite{OrientNP}).

Because of Richter-Gebert's theorem, deciding whether the polynomial system described in Theorem $\ref{eq:poly2}$ over $F_2$
or the polynomial system described in Theorem $\ref{eq:poly}$ over any field of characteristic different two is feasible are NP-complete
problems. Of course, the number of variables of these systems is the sum of the cardinalities of all circuits and cocircuits.

While the system is large, one of the promising aspects of weak orientability is its equivalence to the solvability of the Bland-Jensen linear system. Note that every solution to the Bland-Jensen linear system corresponds to a weak orientation of the matroid. Conversely, every weak orientation of the matroid has a solution to the Bland-Jensen linear system corresponding to it. This correspondence is not one-to-one, as multiple $\Sigma$-mappings, and thus solutions, correspond to the same weak orientation. However we have that the set of all solutions to the Bland-Jensen linear system, under an equivalence relation, is exactly the set of all weak orientations of the matroid. Since we can get a parametric description of all solutions to a linear system, by finding one solution and a basis for the null space of the equation matrix, we can efficiently calculate a simple description of all the weak orientations of the matroid. Finally, because solving systems of linear equations can be carried out efficiently,  we get the following:

\begin{cor}\label{weakt} Given a matroid $M$, specified by the sets of all its circuits and cocircuits,  we can determine in polynomial time, whether  or not
$M$ is weakly-orientable. In the case that it is weakly-orientable, we can give a compact description of all of its weak orientations.
\end{cor}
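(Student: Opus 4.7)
The plan is to derive the corollary directly from Theorem \ref{eq:polyweak} together with the polynomial-time solvability of linear systems over $F_2$. First I would verify that the Bland-Jensen linear system has size polynomial in the input encoding. Writing $N$ for the total bit-length of $E(M)$, $\calc(M)$ and $\calc^*(M)$, the number of variables $a_{e,X}$ and $b_{e,Y}$ is bounded by $|E(M)|(|\calc(M)|+|\calc^*(M)|) \le N^2$, and the equations $(g_{X,Y})$ are indexed by the pairs $(X,Y) \in \calc(M)\times\calc^*(M)$ with $|X\cap Y|=2$, of which there are at most $|\calc(M)|\cdot|\calc^*(M)| \le N^2$. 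Enumerating these pairs and assembling the coefficient matrix can be done in time polynomial in $N$ by inspecting intersections.

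Next I would apply Gaussian elimination over $F_2$, which in polynomial time either certifies infeasibility or returns a particular solution $x_0$ together with a basis $v_1,\ldots,v_k$ for the null space of the coefficient matrix, producing the parametric description
\[
\{ x_0 + \lambda_1 v_1 + \cdots + \lambda_k v_k ~\big|~ \lambda_i \in F_2 \}
\]
of the full solution set. By Theorem \ref{eq:polyweak}, feasibility is equivalent to weak orientability of $M$, which settles the decision portion. For the structural portion, the translation $a_{e,X}=\pi(\phi_\Sigma(e,X))$, $b_{e,Y}=\pi(\phi_\Sigma^*(e,Y))$ from the proof of Theorem \ref{eq:polyweak} shows that each solution induces a $\Sigma$-mapping, and hence a signing $(\co,\costar)$ that is a weak orientation of $M$; conversely every weak orientation arises from at least one such solution.

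The main subtlety, already flagged in the paragraph preceding the corollary, is that this correspondence is many-to-one: flipping every $a_{e,X}$ for a fixed circuit $X$ (respectively every $b_{e,Y}$ for a fixed cocircuit $Y$) replaces the signed circuit $X'$ by $-X'$ (resp.\ $Y'$ by $-Y'$) while leaving the unordered collection $\co$ (resp.\ $\costar$) unchanged. These ``sign-flip'' vectors lie in the null space of the coefficient matrix, so the quotient of the affine flat above by the subspace they generate is in canonical bijection with the weak orientations of $M$. The step I expect to be the main obstacle is not the complexity bound, which is immediate from Gaussian elimination, but rather articulating this quotient carefully enough that the resulting parametric description is genuinely ``compact''; writing out the sign-flip generators explicitly and checking their containment in the null space suffices.
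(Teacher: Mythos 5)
Your proposal is correct and follows essentially the same route as the paper: the paper likewise derives the corollary from Theorem \ref{eq:polyweak} by noting the Bland--Jensen system has size polynomial in the circuit/cocircuit encoding, solving it by linear algebra over $F_2$, and describing all solutions by a particular solution plus a null-space basis, with weak orientations corresponding to solutions up to an equivalence relation. Your explicit identification of that equivalence as the quotient by the circuit/cocircuit sign-flip vectors (which indeed lie in the null space) is merely a more detailed rendering of what the paper states informally.
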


Again, we see this as a useful result for arbitrary matroids, because in some circumstances a small encoding of a matroid is simply not at hand   (e.g., some matroids are not representable over any field) and may not even be possible.  But there are classes of matroids that have short descriptions and could be used for computing much faster. For example, in 1975 Seymour  \cite{seymour} proved that  a matroid is binary if and only if $|X\cap Y|\neq 3$ for all pairs of circuits and cocircuits $X\in\calc(M)$ and $Y\in\calc^*(M)$.
Thus a binary matroid is weakly-orientable if and only if it is orientable. While deciding whether a binary matroid is orientable can be determined using Corollary \ref{weakt}, this is
perhaps not as efficient as applying deep results of Seymour: as a consequence of Seymour's work on regular matroids (see \cite{Decomp}), we can already efficiently determine orientability directly from the $F_2$ matrix representing the matroid. In conclusion, while in some cases our system of equations could be  considered too large, in others, it may be quite practical. See \cite{HausKort} and \cite{Mayhew08} for discussions of matroid axioms in relation to computational complexity.

\section{A key example: The Fano Matroid} \label{sec_fano}

The Fano Matroid, derived from the Fano Plane (the finite projective plane of order 2, having the smallest possible number of points and lines) is a non-orientable matroid on seven elements \cite{MinZieg}. Since all matroids on six or less elements are orientable, this is a classical example of a minimal non-orientable matroid. Here we use our systems of polynomial equations to verify that the Fano matroid is indeed non-orientable.
\\

The Fano matroid is the matroid on the elements $\{1,2,3,4,5,6,7\}$, where every three element subset is a basis with the exception of the sets $\{1,2,3\}$, $\{1,6,7\}$, $\{3,5,6\}$, $\{2,5,7\}$, $\{3,4,7\}$, $\{1,4,5\}$, and $\{2,4,6\}$. Here are the circuits and cocircuits of this matroid:
\begin{center}
\begin{tabular}{cc|c}
\multicolumn{2}{c|}{Circuits} & Cocircuits\\ \hline
$\{1,6,7\}\spc$ & $\{4,5,6,7\}\spc$ & $\{2,3,4,5\}$\\
$\{3,5,6\}\spc$ & $\{2,3,6,7\}\spc$ & $\{1,2,4,7\}$\\
$\{2,5,7\}\spc$ & $\{1,2,5,6\}\spc$ & $\{1,3,5,7\}$\\
$\{3,4,7\}\spc$ & $\{1,3,4,6\}\spc$ & $\{1,3,4,6\}$\\
$\{1,4,5\}\spc$ & $\{1,3,5,7\}\spc$ & $\{1,2,5,6\}$\\
$\{2,4,6\}\spc$ & $\{1,2,4,7\}\spc$ & $\{2,3,6,7\}$\\
$\{1,2,3\}\spc$ & $\{2,3,4,5\}\spc$ & $\{4,5,6,7\}$\\

\end{tabular}
\end{center}
From this list of circuits and cocircuits, we see the following table of intersection cardinalities:
\begin{center}
\includegraphics[scale=0.65]{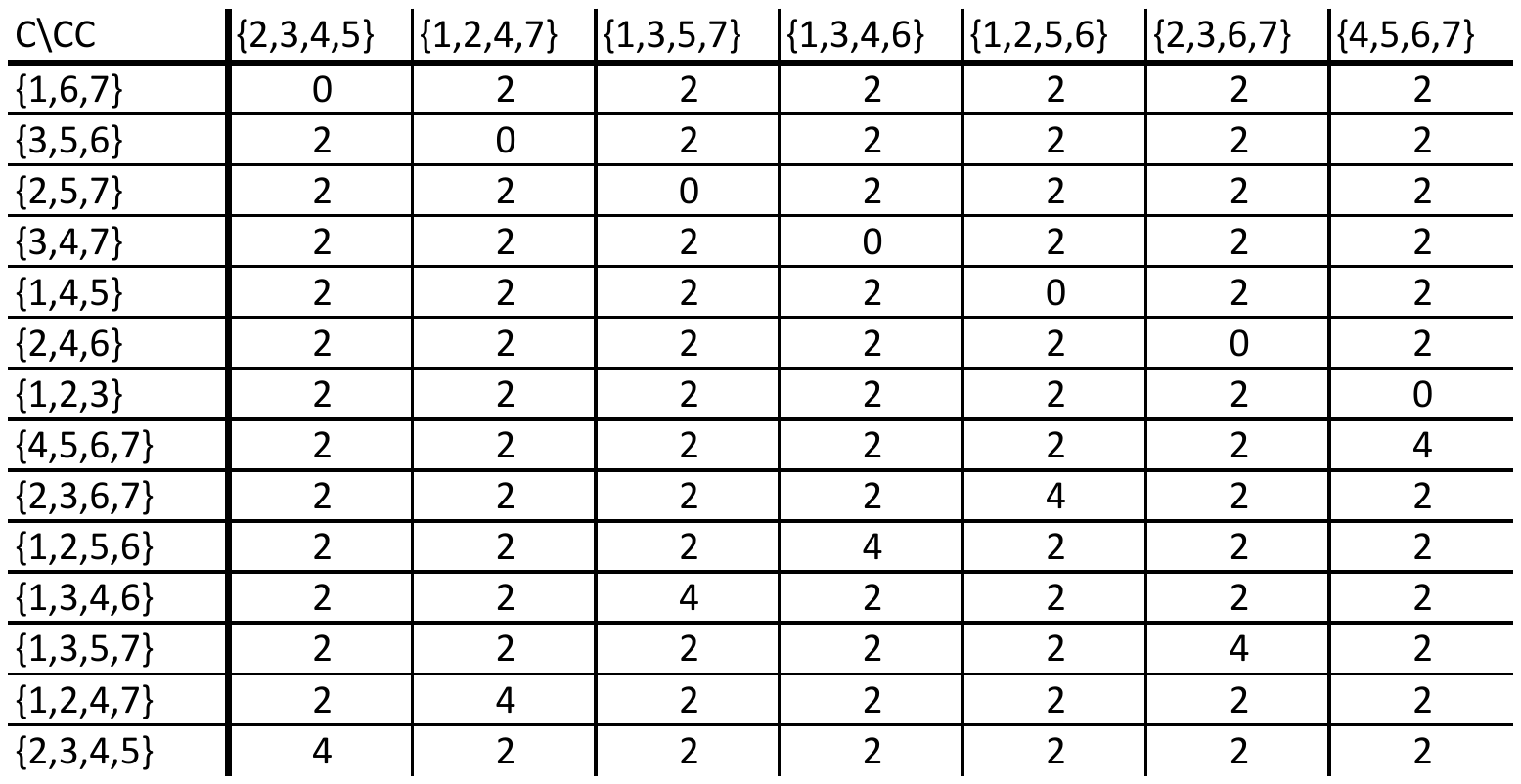}

\end{center}

We observe from this table of intersections that the matroid is binary (because every circuit/cocircuit intersection has even cardinality), and thus we 
can determine its orientability via the linear system over $F_2$ defined in Thm. \ref{eq:polyweak}.
Since this system can easily be checked for a solution using any linear system solver over
 $F_2$, we have an easy and compact proof that the Fano Matroid is non-orientable.

Consider a proof of this via the famous Hilbert's Nullstellensatz  (see \cite{Eisen}). Given an algebraically-closed field $\mathbb{K}$ and a set of polynomials $f_1,\ldots,f_s \in \mathbb{K}[x_1,\ldots,x_n]$, Hilbert's Nullstellensatz states that the system of polynomial equations $f_1 = f_2 = \cdots = f_s = 0$ has \emph{no} solution if and only if there exist polynomials $\beta_1,\ldots,\beta_s \in \mathbb{K}[x_1,\ldots,x_n]$ such that

$1 = \sum_{i=1}^{s}\beta_if_i~.$ We measure the complexity of a given certificate in terms of the size of the $\beta$ coefficients, because these are the unknowns we must discover in order to demonstrate the \emph{non}-existence of a solution to $f_1 = f_2 = \cdots = f_s = 0$. Thus, we measure the degree of a Nullstellensatz certificate as $d = \max\{\deg(\beta_1),\ldots, \deg(\beta_s)\}$.

Consider the following \emph{degree-zero} Hilbert's Nullstellensatz certificate for the \emph{non}-orientability of the Fano matroid:
\begin{align*}
1 &= (a_{6,167}+a_{7,167}+b_{6,2367}+b_{7,2367}+1) +(a_{6,167}+a_{7,167}+b_{6,4567}+b_{7,4567}+1)\\
&\phantom{=}\hspace{3pt} + (a_{3,356}+a_{5,356}+b_{3,1357}+b_{5,1357}+1) + (a_{3,356}+a_{6,356}+b_{3,2367}+b_{6,2367}+1)\\
&\phantom{=}\hspace{3pt} + (a_{5,356}+a_{6,356}+b_{5,4567}+b_{6,4567}+1) + (a_{5,257}+a_{7,257}+b_{5,1357}+b_{7,1357}+1)\\
&\phantom{=}\hspace{3pt}  + (a_{5,257}+a_{7,257}+b_{5,4567}+b_{7,4567}+1) + (a_{3,347}+a_{7,347}+b_{3,1357}+b_{7,1357}+1)\\
&\phantom{=}\hspace{3pt}  + (a_{3,347}+a_{7,347}+b_{3,2367}+b_{7,2367}+1)  \hspace{5pt}\text{mod}~2~.
\end{align*}

This certificate (and the others that follow) are found via the \emph{Nullstellensatz Linear Algebra algorithm} (or NulLA, see  \cite{Nulla2} for details). Roughly speaking NulLA works as follows: Given a system of polynomial equations, we fix a tentative degree $d$ for the certificate meaning
$\deg(\beta_if_i) = d$ for every $i=1,...,s$. We can decide whether there is a Nullstellensatz certificate of degree $d$
by solving a system of \emph{linear} equations over the field $\mathbb{K}$ whose variables are in bijection with the coefficients of the monomials of
the polynomials $\beta_1,\dots,\beta_s$. If this linear system has a solution, we have found a certificate; otherwise, we try a higher
degree for the certificate. This process is guaranteed to terminate because, for a Nullstellensatz certificate to exist, the degrees of the certificate cannot be more than known bounds.

Fano is non-orientable, thus our polynomial systems detecting orientability must be infeasible, thus a Nullstellensatz certificate must exist.
We experimented a bit with how much the degree of the coefficients grow. We considered the system of polynomial equations from Theorem \ref{eq:poly} over the field $F_3$. Our first attempt to prove the non-orientability of the Fano matroid via NulLA yielded a certificate of degree four or greater. Despite allocating 12GB of RAM on a high-performance computing cluster, we were unable to explicitly determine a certificate beyond this bound.

We now proceed to described a modified version of the system over $F_3$ (or any field), for which NulLA is able to find a certificate of smaller degree. In order to simplify the original system, we observe that certain variables in our system may be fixed to a constant value of one.
We note that for a given $\Sigma$-mapping on a matroid $M$ satisfying Definition \ref{eq:ordef}, we may flip the sign of this $\Sigma$-mapping on all the elements in a single circuit or cocircuit. Transferring this change to our polynomial system in the case of fields  of characteristic different from two, for any solution to our system and any circuit $X\in\calc(M)$ or cocircuit $Y\in\calc^*(M)$, we may replace it with a new solution with $a_{e,X}$ changed to $-a_{e,X}$ for all $e\in X$ or with $b_{e,Y}$ changed to $-b_{e,Y}$ for all $e\in Y$. Also note that for a given (weak) orientation, we may flip the sign of all the circuit or cocircuit values for a particular element $e\in E$ and obtain a new (weak) orientation (see \cite{Matorient},\cite{Weak}). Correspondingly, for any solution of our polynomial system, for a fixed $e\in E$, we may replace $a_{e,X}$ with $-a_{e,X}$ or $b_{e,X}$ with $-b_{e,X}$ and obtain a new solution.

Using these exchanges, we may assume that values of certain variables in our system are fixed to $1$ if a solution exists, and thus we can do the following variable fixings:
\begin{itemize}
\item For each circuit $X\in\calc(M)$, choose an $e\in X$, and fix $a_{e,X}=1$.
\item For each cocircuit $Y\in\calc^*(M)$, choose an $e\in Y$, and fix $b_{e,Y}=1$.
\item For each element $e\in E$ such that $e$ was not an element chosen in any of the previous variable fixings, choose $X\in\calc(M)$ such that $e\in X$, and fix $a_{e,X}=1$.
\end{itemize}
Such variable fixings are common and have been utilized in previous computational problems for resolving orientability questions on matroids (see \cite{10ele}). These extra restrictions allow us to find an explicit degree-three Nullstellensatz certificate for the non-orientability of the Fano matroid in just over one second of computing time.

\begin{align*}
1 &= (-b_{6,1346}b_{6,1256})(a_{6,167}^2+2) + (b_{6,1346}b_{6,1256})(a_{6,356}^2+2) + (b_{3,1346}b_{5,1256})(a_{5,145}^2+2)\\
&\phantom{=}\hspace{3pt}  + (1+a_{5,257}a_{7,347})(b_{3,1357}^2+2) + (-a_{5,257}a_{7,347}+-a_{5,145}b_{3,1346})(b_{5,4567}^2+2)\\
&\phantom{=}\hspace{3pt}  + (a_{6,167}b_{6,1256})(a_{6,167}b_{6,1346}+1) + 2(a_{6,167}b_{6,1256}+1)\\
&\phantom{=}\hspace{3pt}  + (-a_{5,257}a_{7,347}b_{3,1357})(b_{3,1357}+b_{5,1357}) + (-a_{6,356}b_{6,1256})(a_{6,356}b_{6,1346}+b_{3,1346})\\
&\phantom{=}\hspace{3pt}  + (b_{3,1346})(a_{6,356}b_{6,1256}+b_{5,1256}) + (a_{7,347}b_{3,1357})(a_{5,257}b_{5,1357}+b_{7,1357})\\
&\phantom{=}\hspace{3pt}  + (a_{7,347}b_{5,4567})(a_{5,257}b_{5,4567}+b_{7,4567}) + (-b_{3,1357})(a_{7,347}b_{7,1357}+b_{3,1357})\\
&\phantom{=}\hspace{3pt}  + (-a_{4,145}b_{5,4567})(a_{4,347}b_{4,1346}+b_{3,1346}) + (-b_{5,4567})(a_{4,347}+a_{7,347}b_{7,4567})\\
&\phantom{=}\hspace{3pt} + (a_{4,347}b_{5,4567})(a_{4,145}b_{4,1346}+1) + (-a_{5,145}b_{3,1346})(a_{5,145}b_{5,1256}+1)\\
&\phantom{=}\hspace{3pt}  + (b_{3,1346}b_{5,4567})(a_{4,145}+a_{5,145}b_{5,4567}) \hspace{5pt}\text{mod}~3~.
\end{align*}

As a final application to the Fano Matroid, it is interesting to find minimal infeasible subsystems of the linear system associated to this matroid. This allows us to find patterns that are sufficient for non-weak orientability.

\begin{lem}\label{eq: Syst1} Let $M$ be a matroid, and $F\subseteq E(M)$ with $|F|=4$, with $A_i$, $1\leq i\leq 4$, being the three element subsets of $F$ and $B_j$, $1\leq j\leq 3$, being the two element subsets of $F$ not contained in $A_4$. If there exist circuits $C_{A_4}$, $C_{B_j}$, for all $j$, and cocircuits $C_{A_i}^*$, for $1\leq i\leq 3$ such that
\begin{enumerate}[i)]
\item $C_{A_i}^*\cap C_{B_j}=B_j,\spc\text{ for all }B_j\subseteq A_i$,
\item $C_{A_i}^*\cap C_{A_4}=A_i\cap A_4,\spc\text{ for all }1\leq i\leq 3$,
\end{enumerate}
then $M$ is non-weakly-orientable.
\end{lem}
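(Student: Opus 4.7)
The plan is to apply Theorem \ref{eq:weakeq} directly by exhibiting an odd-length list of circuit-cocircuit pairs satisfying conditions (i)--(iv). The natural candidate list consists of the three pairs $(C_{A_4}, C^*_{A_i})$ for $i=1,2,3$, together with the six pairs $(C_{B_j}, C^*_{A_i})$ for those $i\le 3$ and $j$ with $B_j\subseteq A_i$. Since each of the three $B_j$'s is contained in exactly two of $A_1,A_2,A_3$, this gives $3+6=9$ pairs total, which is odd. Conditions (i) and (ii) are immediate from the hypothesis: every listed intersection is either $A_i\cap A_4$ or $B_j$, both of which are $2$-element subsets of $F$.

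For the parity condition (iii) I would fix a circuit $X$ and $e\in X$ and count the pairs with $X_i=X$ and $e\in X_i\cap Y_i$. If $X=C_{A_4}$, then $X$ appears in three pairs whose intersections are $A_i\cap A_4$ for $i=1,2,3$; every element of $F$ belongs to exactly three of the four $3$-subsets $A_1,\dots,A_4$, so an element of $A_4$ lies in exactly two of the $A_i\cap A_4$, and all other elements lie in none. If $X=C_{B_j}$, then $X$ appears in exactly two pairs, both with intersection $B_j$, so every element of $B_j$ is counted twice and every other element zero times. Other circuits do not appear, contributing zero. Condition (iv) rests on the main combinatorial observation: for fixed $i\le 3$, the three pairs involving $C^*_{A_i}$ have intersections $A_i\cap A_4$ together with the two $B_j\subseteq A_i$, and these are precisely the three $2$-element subsets of $A_i$. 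In particular $A_i\subseteq C^*_{A_i}$, and each element of $A_i$ appears in exactly two of these three $2$-subsets; elements of $C^*_{A_i}\setminus A_i$ appear in none.

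I do not expect any serious obstacle; the verification reduces to the clean combinatorial fact that the three $2$-subsets of the $3$-set $A_i$ are exactly the intersections witnessing the three pairs with cocircuit $C^*_{A_i}$, which forces every parity count to be $0$ or $2$. A stylistic alternative would be to sum the corresponding Bland--Jensen equations from Theorem \ref{eq:polyweak} over these nine pairs: the same bookkeeping shows that every variable occurs an even number of times while the nine constant $+1$'s sum to $1$ modulo $2$, producing an infeasibility certificate in the spirit of Lemma \ref{eq:bifark} and hence non-weak-orientability.
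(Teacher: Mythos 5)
Your proposal is correct and follows essentially the same route as the paper: the paper's proof simply takes the nine circuit/cocircuit pairs named in the hypothesis and observes (leaving the check as ``straightforward'') that they form an odd list of the type in Theorem~\ref{eq:weakeq}, which is exactly the verification you carry out, with the key point being that the three listed intersections meeting $C^*_{A_i}$ are the three $2$-subsets of $A_i$ and the three meeting $C_{A_4}$ are the three $2$-subsets of $A_4$. Your closing remark about summing the corresponding Bland--Jensen equations is just the certificate form of the same argument via Lemma~\ref{eq:bifark}, as in the paper's proof of Theorem~\ref{eq:weakeq}.
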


\begin{proof} Consider all circuit/cocircuit intersections noted in the statement of the lemma as a list of circuit/cocircuit pairs. It is straightforward to check that this list is of the type described in Theorem \ref{eq:weakorlem}, and thus they are sufficient to conclude the matroid to be non-weakly-orientable.
\end{proof}

The usefulness of this sufficient criterion will become apparent in the next section.

\section{An Infinite Family of Non-Weakly Orientable Matroids}

Let $n\geq 0$, and define
$$E_n=\{1,2,3,4,x_0,x_1,...,x_n,y_0,y_1,...,y_n,z_0,z_1,...,z_n\}$$
to be a set of cardinality $3n+7$, and further for a given $n$ let $X=\{x_0,x_1,...,x_n\}$, $Y=\{y_0,y_1,...,y_n\}$ and $Z=\{z_0,z_1,...,z_n\}$. Then define the sets
\\\\\\
\fussy$$H_1=\{1\}\cup Y\cup Z,\spc\spc H_2=\{2\}\cup X\cup Z,\spc\spc H_3=\{3\}\cup X \cup Y,$$
$$C_1=\{1,4\}\cup X,\spc\spc C_2=\{2,4\}\cup Y,\spc\spc C_3=\{3,4\}\cup Z,$$
and the collection of subsets
$$\mathcal{B}_n=\{B\subseteq E_n\big|\spc |B|=n+3,\spc\{1,2,3\}\nsubseteq B,\spc B\nsubseteq C_i,\text{ and }B\nsubseteq H_i\text{ for all }1\leq i\leq 3\}.$$
\begin{lem}\label{eq:matfam} For $n\geq 0$ the pair $(E_n,\mathcal{B}_n)$, henceforth denoted $M\!I_n$, defines a matroid of rank $n+3$ on $3n+7$ elements, with $\mathcal{B}_n$ being the bases of this matroid.
\end{lem}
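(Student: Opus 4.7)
The plan is to verify the matroid basis axioms directly for $\mathcal{B}_n$. Equicardinality is automatic because every member of $\mathcal{B}_n$ has size $n+3$ by definition, so I need only show that $\mathcal{B}_n$ is nonempty and that the basis-exchange axiom holds; the claimed rank $n+3$ and ground-set size $3n+7$ then follow immediately from the construction.

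For nonemptiness, I would exhibit the explicit set $B_0 := \{4, y_0\} \cup X$, which has $n+3$ elements, contains none of $\{1,2,3\}$, is not contained in any $C_i$ (since $y_0 \notin C_1 \cup C_3$ and $X \cap C_2 = \varnothing$), and is not contained in any $H_i$ (since $4 \in B_0$ but $4 \notin H_1 \cup H_2 \cup H_3$). Hence $B_0 \in \mathcal{B}_n$.

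For basis exchange, fix $B_1, B_2 \in \mathcal{B}_n$ and $x \in B_1 \setminus B_2$; I seek $y \in B_2 \setminus B_1$ with $B'_y := (B_1 \setminus \{x\}) \cup \{y\} \in \mathcal{B}_n$. I would argue by contradiction: if every $y \in B_2 \setminus B_1$ fails, then each such $y$ places $B'_y$ into one of
\begin{itemize}
\item[(a)] $\{1,2,3\} \subseteq B'_y$,
\item[(b)] $B'_y = C_i$ for some $i$, or
\item[(c)] $B'_y \subseteq H_i$ for some $i$.
\end{itemize}
A direct count shows that (a) is triggered by at most one $y$ (the unique element of $\{1,2,3\}$ missing from $B_1 \setminus \{x\}$, if two of $\{1,2,3\}$ are already present) and (b) by at most three (one per $C_i$, triggered only when $B_1 \setminus \{x\} = C_i \setminus \{y\}$). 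The key structural observation for (c) is that $H_i \cap H_j$ equals one of $X$, $Y$, or $Z$ for distinct $i,j$, hence has cardinality $n+1$; since $|B_1 \setminus \{x\}| = n+2$, the set $B_1 \setminus \{x\}$ can be contained in at most one hyperplane $H_{i^\star}$. Moreover, when $B_1 \setminus \{x\} \subseteq H_{i^\star}$, we have $B_1 \cap B_2 \subseteq H_{i^\star}$ (since $x$ is the only element of $B_1$ possibly outside $H_{i^\star}$ and $x \notin B_2$), so the hypothesis $B_2 \not\subseteq H_{i^\star}$ forces the existence of an element $y^\star \in (B_2 \setminus B_1) \setminus H_{i^\star}$, which automatically evades trap (c).

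Combining these observations, in the generic case $|B_2 \setminus B_1|$ is large enough that some $y$ evades the at-most-four values forbidden by (a), (b) while also lying outside $H_{i^\star}$ (or being unconstrained by (c) if no such $H_{i^\star}$ exists). The main obstacle is the small-overlap regime $|B_2 \setminus B_1| \leq 4$, where a single $y$ must thread every trap simultaneously; here I would exploit the fact that $B_2 \in \mathcal{B}_n$ itself satisfies all three exclusions, which structurally restricts $B_2 \setminus B_1$ relative to $\{1,2,3\}$, the $C_i$, and $H_{i^\star}$. The edge case $|B_2 \setminus B_1| = 1$ is immediate, as then $B'_y = B_2 \in \mathcal{B}_n$. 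The bulk of the proof is therefore organized as a finite case analysis governed by the size of $B_2 \setminus B_1$ and by whether $B_1 \setminus \{x\}$ lies in some hyperplane $H_{i^\star}$.
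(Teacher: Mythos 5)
Your overall skeleton is the same as the paper's: verify the exchange axiom by contradiction, exhibiting a $y$ such that $(B_1\setminus\{x\})\cup\{y\}$ avoids the three "traps" (containing $\{1,2,3\}$, lying in some $C_i$, lying in some $H_i$), and using the fact that $|B_1\setminus\{x\}|=n+2$ exceeds the size of the pairwise intersections of the $H_i$ (and $C_i$). Your explicit nonemptiness witness $\{4,y_0\}\cup X$ is correct and is a point the paper leaves implicit. However, the decisive part of the argument is missing. Your counting does not close even the "generic" case: when $B_1\setminus\{x\}\subseteq H_{i^\star}$, trap (c) may exclude \emph{all but one} element of $B_2\setminus B_1$ (only the existence of a single $y^\star\in B_2\setminus H_{i^\star}$ is guaranteed), and nothing in your argument prevents that unique $y^\star$ from being exactly the element forbidden by trap (a) or trap (b); so "large $|B_2\setminus B_1|$" does not rescue you. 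And the regime $|B_2\setminus B_1|\le 4$, which you acknowledge is the crux, is left as an unexecuted case analysis with only the vague instruction to "exploit $B_2\in\mathcal{B}_n$."

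The missing idea, which is what the paper actually uses, is the interaction of the traps with $\{1,2,3\}$: every one of the six sets $C_1,C_2,C_3,H_1,H_2,H_3$ meets $\{1,2,3\}$ in exactly one element, and their pairwise intersections have size $1$ or $n+1$, both less than $n+2$. The second fact shows (as you note only for the hyperplanes) that \emph{at most one} of the six sets, say $K$, can contain $B_1\setminus\{x\}$; the first fact shows that trap (a) — which is live only if two elements of $\{1,2,3\}$ already sit in $B_1\setminus\{x\}$ — is \emph{incompatible} with $B_1\setminus\{x\}\subseteq K$ for any of the six sets. The paper runs this in two stages: first, if every $S_\beta$ lay in some $C_i$ or $H_i$, then all of $B\setminus A$ would lie in the unique $K$, forcing $B\subseteq K$, contradiction; second, among the $\beta$ whose $S_\beta$ avoids all $C_i,H_i$, if every such $S_\beta$ contained $\{1,2,3\}$, then two of $\{1,2,3\}$ lie in $A\setminus\{\alpha\}$, so no $C_i$ or $H_i$ contains $A\setminus\{\alpha\}$ at all, whence $B\setminus A=\{e\}$ with $e$ the missing element and $B=(A\setminus\{\alpha\})\cup\{e\}\supseteq\{1,2,3\}$, again a contradiction. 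Without this incompatibility observation your plan, as stated, cannot be completed by counting alone, so the proposal has a genuine gap at its central step.
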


\begin{proof} \fussy We must show that $\mathcal{B}_n$ satisfies the base axiom for a matroid. Fix $A,B\in\mathcal{B}_n$ and $\alpha\in A\setminus B$, then for any $\beta\in B\setminus A$ define $S_\beta :=(A\setminus\{\alpha\})\cup\{\beta\}$. To complete the proof, we must show $S_\beta\in\mathcal{B}_n$ for some $\beta\in B\setminus A$.
\\\\
\fussy Clearly, because $|A|=|B|=n+3$ and $A\setminus B\neq \varnothing$, we have that $B\setminus A$ is non-empty, so there exists some $\beta\in B\setminus A$ so that $S_\beta$ is well-defined and $|S_\beta|=|A|=|B|=n+3$.
\\\\
\fussy Now we show  for some $\beta\in B\setminus A$ that $S_\beta \nsubseteq C_i$ and $S_\beta \nsubseteq H_i$ for $1\leq i\leq 3$. Suppose not; then for all $\beta\in B\setminus A$, we have that $S_\beta \subseteq C_i$ or $S_\beta \subseteq H_i$ for some $1\leq i\leq 3$.

\fussy Now suppose for a given $\beta\in B\setminus A$ that $A\setminus\{\alpha\}\subseteq S_\beta \subseteq C_i$ for some $i$. Because $|A\setminus\{\alpha\}|=n+2$ and $|C_i\cap C_k|=1$ for $i\neq k$, we have that $A\setminus\{\alpha\}\nsubseteq C_k$ for $i\neq k$. Further, because $|C_i\cap H_k|=n+1$ for $i\neq k$ and $|C_i\cap H_i|=1$, we have that $A\setminus\{\alpha\}\nsubseteq H_j$ for all $j$.

\fussy Similarly, suppose that for a given $\beta\in B\setminus A$ that $A\setminus\{\alpha\}\subseteq S_\beta \subseteq H_i$ for some $i$. Again, because $|A\setminus\{\alpha\}|=n+2$ and $|H_i\cap H_k|=n+1$ for $i\neq k$, we have that $A\setminus\{\alpha\}\nsubseteq H_k$ for $i\neq k$. Further, because $|H_i\cap C_k|=n+1$ for $i\neq k$ and $|H_i\cap C_i|=1$, we have that $A\setminus\{\alpha\}\nsubseteq C_j$ for all $j$.

\fussy In either case, we get that there is exactly one of $H_1,H_2,H_2,C_1,C_2,$ and $C_3$ which contains $A\setminus\{\alpha\}$; we denote this distinguished set by $K$. However, because by supposition for all $\beta\in B\setminus A$, we have $S_\beta$ is contained in at least one of these six sets and that $A\setminus\{\alpha\}\subseteq S_\beta$ we get that $S_\beta\subseteq K$ for all $\beta\in B\setminus A$. This implies for all $\beta\in B\setminus A$ that $\beta\in S_\beta\subseteq K$ so that $B\setminus A\subseteq K$. Finally, because $\alpha\notin B$, we have that $B\subseteq(B\setminus A)\cup (A\setminus\{\alpha\})\subseteq K$. This is a contradiction because $B\in\mathcal{B}_n$. Hence, there exists some $\beta\in B\setminus A$ so that $S_\beta \nsubseteq C_i$ and $S_\beta \nsubseteq H_i$ for $1\leq i\leq 3$.
\\\\
\fussy Therefore, let $\overline{B\setminus A}$ denote the $\beta\in B\setminus A$ such that $S_\beta\nsubseteq C_i$ and $S_\beta\nsubseteq H_i$ for $1\leq i\leq 3$. We have that this is non-empty by the above, and therefore to show that there exists a $\beta\in B\setminus A$ such that $S_\beta\in \mathcal{B}_n$, it suffices to show there exists a $\beta\in \overline{B\setminus A}$ such that $\{1,2,3\}\nsubseteq S_\beta$. Suppose not; then because we have for any $\beta\in \overline{B\setminus A}$ that $\{1,2,3\}\subseteq S_\beta$ and also that $A\in\mathcal{B}_n$ so that $\{1,2,3\}\nsubseteq A\setminus\{\alpha\}\subseteq A$, we get that $|\{1,2,3\}\cap (A\setminus\{\alpha\})|=2$ and for all $\beta\in \overline{B\setminus A}$ that $\beta\in\{1,2,3\}$. This implies that $\overline{B\setminus A}\subseteq\{1,2,3\}$ and because $(A\setminus\{\alpha\})\cap \overline{B\setminus A}=\varnothing$ we get that $\overline{B\setminus A}=\{e\}$ where $e\in\{1,2,3\}\setminus(A\setminus\{\alpha\})$. However because $|\{1,2,3\}\cap (A\setminus\{\alpha\})|=2$ whereas $|\{1,2,3\}\cap C_i|=|\{1,2,3\}\cap H_i|=1$ for all $1\leq i\leq 3$ we also get that $A\setminus\{\alpha\}\nsubseteq C_i$ and $A\setminus\{\alpha\}\nsubseteq H_i$ for all $i$ and therefore $S_\beta \nsubseteq C_i$ and $S_\beta\nsubseteq H_i$ for all $i$. This implies that $B\setminus A=\overline{B\setminus A}=\{e\}$. Then we have that $B\subseteq(B\setminus A)\cup (A\setminus\{\alpha\})=(A\setminus\{\alpha\})\cup\{e\}$. However because $e\notin A$, $\alpha\notin B$, and $|A|=|B|$ we have that $B=(A\setminus\{\alpha\})\cup\{e\}$, but $\{1,2,3\}\subseteq (A\setminus\{\alpha\})\cup\{e\}$ which is a contradiction because $B\in\mathcal{B}_n$. Hence there exists $\beta\in \overline{B\setminus A}$ such that $\{1,2,3\}\nsubseteq S_\beta$ and we get that $S_\beta\in\mathcal{B}_n$, thus completing the proof that $(E_n,\mathcal{B}_n)$ is a matroid. That the rank of this matroid is $n+3$ follows immediately.

\end{proof}

Note that the matroid $M\!I_0$ is simply the Fano Matroid, so this family extends the Fano Matroid to a large family of matroids.

\begin{lem} For $n\geq0$ the matroids $M\!I_n$ are non-weakly-orientable.
\end{lem}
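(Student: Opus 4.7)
The plan is to invoke Lemma \ref{eq: Syst1} on the four-element set $F = \{1,2,3,4\} \subseteq E_n$, taking the distinguished triple to be $A_4 = \{1,2,3\}$. Then the remaining triples are $A_1 = \{1,2,4\}$, $A_2 = \{1,3,4\}$, $A_3 = \{2,3,4\}$, and the pairs not contained in $A_4$ are $B_1 = \{1,4\}$, $B_2 = \{2,4\}$, $B_3 = \{3,4\}$. Each $A_i$ with $i\le 3$ contains exactly two of the $B_j$, giving the combinatorial incidence required by the lemma.

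All four circuits and three cocircuits needed are visible in the construction of $M\!I_n$. For the circuits I would take $C_{A_4} := \{1,2,3\}$ and $C_{B_j} := C_j$ for $j=1,2,3$; these sets are explicitly barred from $\mathcal{B}_n$, hence dependent, and minimality is verified by showing that every proper subset extends to a base (for $\{1,2,3\}$, remove one of $1,2,3$ and adjoin a suitable $x_i$; for each $C_j$, remove any element and adjoin one from the ``opposite'' part). For the cocircuits I would take the complements of the hyperplanes $H_1, H_2, H_3$: set $C_{A_1}^* := E_n\setminus H_3 = \{1,2,4\}\cup Z$, $C_{A_2}^* := E_n\setminus H_2 = \{1,3,4\}\cup Y$, and $C_{A_3}^* := E_n\setminus H_1 = \{2,3,4\}\cup X$. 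To legitimize these, I would show each $H_i$ is a flat of rank $n+2$: it contains no base by definition, it contains an independent set of size $n+2$ (e.g., $\{1\}\cup Y \subseteq H_1$ extends to a base by adjoining $4$), and adjoining any outside element produces a base by a case analysis similar in spirit to the proof of Lemma \ref{eq:matfam}.

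With those identifications, verifying the two intersection conditions of Lemma \ref{eq: Syst1} is pure set arithmetic, relying only on the pairwise disjointness of $X, Y, Z$ and their disjointness from $\{1,2,3,4\}$. For instance, $C_{A_1}^* \cap C_{B_1} = (\{1,2,4\}\cup Z)\cap(\{1,4\}\cup X) = \{1,4\} = B_1$ and $C_{A_1}^* \cap C_{A_4} = (\{1,2,4\}\cup Z)\cap\{1,2,3\} = \{1,2\} = A_1\cap A_4$. The remaining five intersections are checked identically, using the symmetry of the construction. Lemma \ref{eq: Syst1} then immediately delivers non-weak-orientability.

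The only real obstacle is matroid-theoretic bookkeeping: confirming that the six excluded sets $H_1, H_2, H_3, C_1, C_2, C_3$ really are rank-$(n+2)$ flats (so their complements are cocircuits), and that $\{1,2,3\}$ together with each $C_j$ really are circuits. These facts follow from the same kind of exchange-and-extension arguments used in Lemma \ref{eq:matfam}, leveraging both the threefold symmetry between $X, Y, Z$ and the controlled overlap sizes $|H_i\cap H_j|, |H_i\cap C_j|, |C_i\cap C_j| \le n+1$ for $i\ne j$, which prevent any outside element from being absorbed into a given hyperplane.
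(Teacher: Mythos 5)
Your proposal follows essentially the same route as the paper: identify $\{1,2,3\}$ and $C_1,C_2,C_3$ as circuits, show $H_1,H_2,H_3$ are rank-$(n+2)$ hyperplanes so their complements are cocircuits, and then apply Lemma \ref{eq: Syst1} with $F=\{1,2,3,4\}$ and $A_4=\{1,2,3\}$; your intersection computations and pairing of each $H_i^c$ with the appropriate $A_j$ are correct (only the index labels differ from the paper). The bookkeeping you defer (independence of proper subsets of the circuits and the case analysis showing $H_i\cup\{e\}$ contains a basis for $e\notin H_i$) is exactly what the paper's proof carries out explicitly, and your sketches of those checks are sound.
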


\begin{proof} We will show that the $M\!I_n$ satisfy the conditions of Lemma \ref{eq: Syst1}.
\\\\
\fussy To do this first we note for any set $I\subseteq E_n$ with $|I|=n+2$ and $\{1,2,3\}\nsubseteq I$ that $I$ is a independent set. For if $|I\cap\{1,2,3\}|=2$ then for any $e\in E_n\setminus\{1,2,3\}$ we have $I\cup\{e\}$ is a basis and if $|I\cap\{1,2,3\}|=1$ then for any $f\in\{1,2,3\}$ with $f\notin I$ we have $I\cup\{f\}$ is a basis.

 If $I\cap\{1,2,3\}=\varnothing$ then we have two cases. If $4\in I$ then $I$ must also intersect at least one of $X,Y,\text{ or }Z$, so by choosing $e\in X\cup Y\cup Z$ so that $e$ is not in one of the sets $I$ intersects then $I\cup\{e\}$ will not be contained in any of the $C_i$ or $H_i$ and will be a basis. If $4\notin I$ then $I$ must intersect at least two of $X,Y,\text{ or }Z$, so by choosing $e\in X\cup Y\cup Z$ so that $I\cup\{e\}$ intersects all three sets $X,Y,\text{ and }Z$ we get that it is a basis as well.
\\\\
\fussy Now for any $\{e,f\}\subseteq\{1,2,3\}$  we have $\{e,f\}\cup (X\setminus\{x_n\})$ satisfies the above conditions and hence is independent so that $\{e,f\}$ is independent and because $\{1,2,3\}$ is clearly dependent because it cannot be contained in any basis we have that $\{1,2,3\}$ is a circuit.

\fussy Since $|C_i|=n+3$ and $\{1,2,3\}\nsubseteq C_i$ for all $1\leq i\leq 3$, we have any proper subset of any $C_i$ is independent and because $C_i$ is clearly dependent for all $i$, we have that $C_i$ is a circuit of $M\!I_n$ for all $i$.

\fussy Further because $|H_i|=2n+3$ and $\{1,2,3\}\nsubseteq H_i$ for all $1\leq i\leq 3$ we have that $H_i$ contains a cardinality $n+2$ independent set for all $i$, but because any $H_i$ clearly does not contain a basis we have that $\text{rank}(H_i)=n+2$ for all $i$. These $H_i$ are also closed sets and thus hyperplanes, to see this denote $K_1=X, K_2=Y,$ and $K_3=Z$. Then for any $e\notin H_i$ if $e=4$ then $\{i,e\}\cup K_j\subseteq H_i\cup\{e\}$ is a basis for $j\neq i$, if $e\in\{1,2,3\}\setminus\{i\}$ then $\{i,e\}\cup K_j\subseteq H_i\cup\{e\}$ is a basis for $j\neq i$, and if $e\in K_i$ then $\{e,f\}\cup K_j\subseteq H_i\cup\{e\}$ is a basis for $j\neq i$ and $f\in K_k$ where $k\neq i,j$. So for all $e\notin H_i$ we have $\text{rank}(H_i\cup\{e\})=n+3$ and thus $H_i$ is a hyperplane for all $i$. Then because cocircuits are exactly complements of hyperplanes we have that $H_i^c$ is a cocircuit of $M\!I_n$ for all $i$.
\\\\
\fussy Then letting $F=\{1,2,3,4\}$, $A_1=\{2,3,4\}$,$A_2=\{1,3,4\}$,$A_3=\{1,2,4\}$,$A_4=\{1,2,3\}$, $B_1=\{1,4\}$,$B_2=\{2,4\}$, and $B_3=\{3,4\}$ as in Lemma \ref{eq: Syst1}, we have circuits $C_{A_4}=\{1,2,3\}$ and $C_{B_i}=C_i$ for all $i$ and cocircuits $C^*_{A_i}=H_i^c$ for all $i$. These can be easily checked to satisfy the conditions of Lemma \ref{eq: Syst1}, and hence by the lemma we get that the $M\!I_n$ is non-weakly-orientable.
\end{proof}

It was originally conjectured by Bland and Jensen (see \cite{Weak}) that weak-orientability cannot be described by a finite list of excluded minors. If the matroids $M\!I_n$ are minor-minimal with respect to weak-orientability, then this would resolve the conjecture in the positive. We have by symmetry in the $M\!I_n$ that
\[
M\!I_n/\{x_n\} \cong M\!I_n/\{e\}\text{ and }M\!I_n\setminus\{x_n\} \cong M\!I_n\setminus\{e\}\text{, for all }e\in X\cup Y\cup Z
\]
\[
M\!I_n/\{1\} \cong M\!I_n/\{e\}\text{ and }M\!I_n\setminus\{1\} \cong M\!I_n\setminus\{e\}\text{, for all }e\in\{1,2,3\},
\]
and thus to check the $M\!I_n$ are minor-minimal it suffices to check that for all $n$: $M\!I_n/\{x_n\},M\!I_n\setminus\{x_n\},M\!I_n/\{1\},M\!I_n\setminus\{1\},M\!I_n/\{4\},$ and $M\!I_n\setminus\{4\}$ are all weakly-orientable. Using this fact we have checked explicitly using the Bland-Jensen linear systems that $M\!I_n$ are minor-minimal for $n\leq 2$. Thus we conjecture:
\begin{conj} For all integers $n\geq 0$, $M\!I_n$ are minor-minimal with respect to being non-weakly-orientable. Hence, weak-orientability cannot be described by a finite list of excluded minors.
\end{conj}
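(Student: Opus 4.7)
The plan is to prove that for each $n\geq 0$, every single-element minor of $M\!I_n$ is weakly orientable. By the isomorphisms listed immediately before the conjecture, this reduces to verifying weak orientability of the six representative minors $M\!I_n/\{x_n\}$, $M\!I_n\setminus\{x_n\}$, $M\!I_n/\{1\}$, $M\!I_n\setminus\{1\}$, $M\!I_n/\{4\}$, and $M\!I_n\setminus\{4\}$. Combined with the preceding lemma that $M\!I_n$ is non-weakly-orientable, this gives minor-minimality of the family, and since the rank and co-rank both grow with $n$, the ``Hence'' clause is immediate.

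First I would make explicit the circuits and cocircuits of each of the six minors $N$, using the standard formulas $\calc(M/e) = \min\{C\setminus\{e\} : C\in\calc(M)\}$ and $\calc^*(M/e) = \{D\in\calc^*(M) : e\notin D\}$, and their duals for deletion, together with the description of $M\!I_n$ via $\mathcal{B}_n$. Then for each such $N$ I would produce an explicit weak orientation. Two complementary routes present themselves. The first is via representability: if $N$ can be realized over $F_3$ (or any field in which $-1$ is not a square), the Bland--Jensen corollary yields weak orientability for free. The base case is encouraging: $M\!I_0=F_7$, and each single-element minor of $F_7$ is regular (indeed $F_7\setminus e\cong M(K_4)$, while $F_7/e$ is the rank-two matroid consisting of three parallel pairs), hence representable over every field. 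The aim would then be to extend this to all $n$ by exhibiting a uniform $F_3$-matrix realization of each minor respecting the $S_3\wr S_{n+1}$ symmetry of $M\!I_n$. The alternative route is a direct construction of signings $a_{e,X}, b_{e,Y}$ depending only on the symmetry orbits of their arguments, reducing the orthogonality check in Lemma \ref{eq:weakorlem} to finitely many orbit representatives.

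The main obstacle is the uniformity in $n$: the authors have verified the conjecture computationally only for $n\leq 2$, and the difficulty in the general case is twofold. First, the set of circuits of $M\!I_n$ grows richly with $n$; beyond the distinguished small circuits $\{1,2,3\}$ and $C_1, C_2, C_3$, many additional circuits arise from dependencies spanning multiple hyperplanes $H_i$, and obtaining a clean parametric enumeration (and hence a clean list of circuit/cocircuit pairs with intersection of size two in each minor) is delicate. Second, for the representability route, $M\!I_n$ itself is \emph{never} ternary (since $M\!I_0=F_7$ is not ternary, and ternary representability is closed under taking minors), so a ternary representation must be built separately for each minor and cannot be inherited from a representation of the parent matroid. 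Identifying the right ternary matrix, or alternatively the right orbit-signing pattern for the direct approach, is where the bulk of the technical work must lie.
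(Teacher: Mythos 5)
There is a fundamental mismatch here: the statement you are addressing is a \emph{conjecture} in the paper, not a theorem. The authors give no proof; they only (a) observe the symmetry reduction to the six representative minors $M\!I_n/\{x_n\}$, $M\!I_n\setminus\{x_n\}$, $M\!I_n/\{1\}$, $M\!I_n\setminus\{1\}$, $M\!I_n/\{4\}$, $M\!I_n\setminus\{4\}$, and (b) verify these are weakly orientable by solving the Bland--Jensen linear systems computationally for $n\leq 2$. Your proposal follows exactly the same reduction, which is fine, but it is a research plan rather than a proof: the decisive content --- a uniform-in-$n$ weak orientation (via explicit signings $a_{e,X}, b_{e,Y}$ satisfying Lemma \ref{eq:weakorlem}) or a uniform ternary/``$-1$ not a square'' representation of each of the six minors --- is exactly what you defer as ``where the bulk of the technical work must lie.'' Neither route is carried out, and no parametric description of the circuits and cocircuits of $M\!I_n$ (beyond $\{1,2,3\}$, the $C_i$, and the $H_i^c$) is produced, so nothing beyond the paper's own computational evidence for $n\leq 2$ is established. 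The base case observations ($M\!I_0=F_7$, $F_7\setminus e\cong M(K_4)$, $F_7/e$ a rank-two matroid of three parallel pairs) are correct but do not propagate to $n\geq 1$.

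One of your side arguments is also flawed as stated: you conclude that $M\!I_n$ is never ternary ``since $M\!I_0=F_7$ is not ternary, and ternary representability is closed under taking minors.'' That inference requires $F_7$ to be a minor of $M\!I_n$, which you have not shown --- and indeed, if the conjecture is true, $F_7$ \emph{cannot} be a proper minor of $M\!I_n$ for $n\geq 1$, since $F_7$ is itself non-weakly-orientable while all proper minors of a minor-minimal example are weakly orientable. The correct (and immediate) argument is that $M\!I_n$ is non-weakly-orientable by the preceding lemma, hence not representable over any field of order congruent to $3$ modulo $4$, in particular not ternary. This does not affect your overall strategy, but the strategy itself remains incomplete: as written, the proposal does not prove the conjecture, it restates the open problem together with sensible but unexecuted lines of attack.
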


\section{A Classification of Small Weakly-Orientable Matroids} \label{tablesofwoms}

The theoretical discussion in Section \ref{introtowoms} (and a description of the NulLA algorithm in Section \ref{sec_fano}) presents a clear path for efficiently classifying matroids as weakly or non-weakly-orientable. By computing a solution to the Bland-Jensen linear equations, or similarly, presenting a degree one Nullstellensatz certificate or verifying that no such certificate exists, we can quickly test a large number of matroids for weak orientability.

Using the list of all matroids with 12 or fewer elements conveniently generated in \cite{10ele} (as well as the list of \cite{9ele}), we tested all the matroids with 9 elements or less, and all the matroids between 10 and 12 elements having rank three, for weak orientability. The linear systems associated with matroids on 9 elements or less were solvable using MATLAB, but the systems associated with the larger element matroids (10, 11 and 12) were too large and too numerous for MATLAB to effectively handle. However, utilizing the high-performance computing cluster at Penn State University (and the NulLA software optimized for solving systems of linear equations over $F_2$), we were able to process both the 10,037 ten element matroids, and the 298,491 eleven element matroids, in just a few hours. Not surprisingly, processing the 31,899,134 twelve element matroids was more difficult. Using 90 parallel jobs, with each allocated 2GB of RAM, we were able to process all the 12 element matroids in under 12 hours. The Penn State University computing cluster has an Intel Xeon X5675 Six-Core 3.06 GHz processor.

The results are summarized in the following tables. It is evident that about half the time when a matroid
was not orientable, the matroid was already non-weakly-orientable and thus non-realizable over infinitely many finite fields (including $F_3$).
The reader can also access the specific non-weakly-orientable matroids directly from the web site: \\ \url{https://www.math.ucdavis.edu/~jmiller/weakor/}. Any code related to these computations will be made available on request.

\begin{center}
\begin{tabular}{|l||*{6}{c|}}\hline
\multicolumn{7}{|c|}{\makebox{\raisebox{-0.4\height}{\bf Numbers of Non-Weakly Orientable/Total Number of Matroids}}}\\\hline
\backslashbox{Rank}{\raisebox{-0.2\height}{Size}}
&\makebox[3em]{7}&\makebox[3em]{8}&\makebox[3em]{9} &\makebox[3em]{10}&\makebox[3em]{11} &\makebox[3em]{12}\\\hline\hline
&&&&&&\\
\tab 3 & \makebox{$\displaystyle\frac{1}{108}$}& \makebox{$\displaystyle\frac{4}{325}$}&\makebox{$\displaystyle\frac{20}{1275}$}&\makebox{$\displaystyle\frac{172}{10037}$}&\makebox{$\displaystyle\frac{5670}{298491}$}&\makebox{$\displaystyle\frac{1080959}{31899134}$}\\[3ex]\hline
&&&&&&\\
\tab 4 &\makebox{$\displaystyle\frac{1}{108}$}&\makebox{$\displaystyle\frac{31}{940}$}&\makebox{$\displaystyle\frac{7912}{190214}$}&{$\displaystyle\frac{?}{4886380924}$}&$?$&$?$ \\[3ex]\hline
\end{tabular}
\end{center}

\begin{center}
\begin{tabular}{|l||*{6}{c|}}\hline
\multicolumn{7}{|c|}{\makebox{\raisebox{-0.4\height}{\bf Numbers of Simple Non-Weakly Orientable/ Simple Non-Orientable Matroids}}}\\\hline
\backslashbox{Rank}{\raisebox{-0.2\height}{Size}}
&\makebox[3em]{7}&\makebox[3em]{8}&\makebox[3em]{9} &\makebox[3em]{10}&\makebox[3em]{11} &\makebox[3em]{12}\\\hline\hline
&&&&&&\\
\tab 3 & \makebox{$\displaystyle\frac{1}{1}$}& \makebox{$\displaystyle\frac{2}{3}$}&\makebox{$\displaystyle\frac{9}{18}$}&\makebox{$\displaystyle\frac{105}{201}$}&\makebox{$\displaystyle\frac{4877}{9413}$}&\makebox{$\displaystyle\frac{1023104}{1999921}$}\\[3ex]\hline
&&&&&&\\
\tab 4 &\makebox{$\displaystyle\frac{1}{1}$}&\makebox{$\displaystyle\frac{29}{34}$}&\makebox{$\displaystyle\frac{7794}{12284}$}&?&$?$&$?$ \\[3ex]\hline
\end{tabular}
\end{center}

\section{Acknowledgments} We are grateful to Bernd Sturmfels and J\"urgen Richter-Gebert for their useful suggestions and comments.
J.A. De Loera was partially supported by NSF grant DMS--0914107. J. Lee was partially supported by the NSF Grant CMMI--1160915.
J. Miller was partially supported by VIGRE-NSF grant DMS--0636297.

\bibliographystyle{plain}
\bibliography{paperbib}

\end{document}